\renewcommand{\MR}[1]{} 
\renewcommand{\PrintDOI}[1]{}
\newcommand{\DeclareMathActive}[2]{
  
  \expandafter\edef\csname keep@#1@code\endcsname{\mathchar\the\mathcode`#1 }
  \begingroup\lccode`~=`#1\relax
  \lowercase{\endgroup\def~}{#2}
  \AtBeginDocument{\mathcode`#1="8000 }
}
\newcommand{\std}[1]{\csname keep@#1@code\endcsname}
  \patchcmd{\newmcodes@}{\Umathcodenum `\-\relax}{\std@minuscode\relax}{}{\ddt}
\def\xyz{\@ifstar{\dast}{\std{*}}}
\def\dast{\mathbin{\mathpalette{}{{\std{*}\std{*}} }}}
\newtheoremstyle{dotless}{}{}{\itshape}{}{\bfseries}{}{ }{}
\newtheorem{Theorem}{Theorem}[section]
\newtheorem{Prop}[Theorem]{Proposition} 
\newtheorem{corollary}[Theorem]{Corollary} 
\newtheorem{lemma}[Theorem]{Lemma}
\theoremstyle{definition}
\newtheorem*{Acknowledgements}{Acknowledgements}
\newcommand{\ub}{\overline{\beta}}
\newcommand{\lb}{\underline{\beta}}
\DeclareMathOperator{\Ore}{Ore}
\DeclarePairedDelimiter\abs{\lvert}{\rvert}
\newcommand{\cG}{\mathcal {G}} 
\newcommand{\F}{{\mathbb F}} 
\newcommand{\nn}{{\mathbb N}} 
\newcommand{\zz}{{\mathbb Z}} 
\newcommand{\rr}{{\mathbb R}} 
\renewcommand{\qq}{{\mathbb Q}} 
\newcommand{\supp}{\operatorname{supp}}
\newcommand{\osum}{\oplus} 
\newcommand{\gf}{\varphi} 
\newcommand{\gm}{\mu} 
\newcommand{\gt}{\tau} 
\newcommand{\FG}{\F G}
\newcommand{\FH}{\F H}
\newcommand{\FK}{\F K}
\newcommand{\FJ}{\F J}
\newcommand{\DG}{D_{\FG}}
\DeclareMathOperator{\im}{Im}
\renewcommand{\ker}{\operatorname{Ker}}
\DeclareMathOperator{\Aut}{Aut}
\DeclareMathOperator{\Div}{Div}
\newcommand{\nsgp}{\mathrel{\triangleleft}} 
\newcommand{\onto}{\twoheadrightarrow}
\numberwithin{equation}{section} 
\title[Orders and Fibering]
{Orders and Fibering}
\begin{document}\begin{abstract}
    In 2018, Kielak gave a virtual fibering criterion for RFRS groups.
    In this paper, we present a simpler proof of this.
\end{abstract}
\author{Boris Okun} \address{Department of Mathematical Sciences, University of Wisconsin--Milwaukee, Milwaukee, WI 53201}
\email{okun@uwm.edu}

\author{Kevin Schreve} \address{Department of Mathematics, Louisiana State University, Baton Rouge, LA~70806}
\email{kschreve@lsu.edu}

\date{\today}

\maketitle

\section{Introduction} It is typically difficult to compute the $L^2$-homology groups of a space.
One useful tool is L\"{u}ck's Mapping Torus Theorem \cite{l94}: if $X$ is a finite CW-complex and $f: X \to X$ a self map, then the $L^2$-homology groups of $\widetilde T_f$, the universal cover of the mapping torus of $f$, vanish in all dimensions.

Remarkably, through work of Kielak in \cite{k20a}, and extended by Jaikin-Zapirain in \cite{j21} and Fisher in \cite{f24}, there is now essentially a converse to L\"{u}ck's theorem for aspherical complexes with residually finite rationally solvable (RFRS) fundamental group.
The main point of this paper is to provide a different proof of this.
The RFRS condition is somewhat technical, but it turns out that the class of RFRS groups is the same as the class of residually (virtually abelian and locally indicable) groups, and the latter description is more relevant for this paper\footnote{In the first version of this article, we claimed that residually (virtually abelian and locally indicable) groups formed a larger class than RFRS groups.
Sam Fisher and Dawid Kielak explained to us that they are the same.
We present their argument in Section \ref{s:resid}.}
. The correct statement of the converse requires some setup.
First, the proof of the mapping torus theorem is robust, and in particular works for analogues of $L^2$-homology groups over an arbitrary field $\F$.
For a RFRS group $G$, these are defined as the homology groups $H_\ast(G; D_{\FG})$, where the coefficients are a certain skew field $D_{\FG}$ containing the group ring $\FG$.
For $\F = \qq$, the existence of $D_{\qq G}$ follows from the strong Atiyah conjecture, which is known for RFRS groups by work of Schick \cites{s00,s02}, and Linnell \cite{l93} showed that the usual $L^2$-Betti numbers of $G$ are equal to $b_\ast(G; D_{\qq G})$.
Jaikin-Zapirain \cite{j21} showed that $D_{\FG}$ exists for RFRS groups and all fields, and a theorem of Hughes \cite{h70} implies that $D_{\FG}$ is unique up to $\FG$-module isomorphism; this implies a number of useful properties that we review in Section \ref{s:linnell}.
Secondly, it turns out that vanishing of $H_\ast(G; D_{\FG})$ is a commensurability invariant, so vanishing should only imply a mapping torus structure for a finite index subgroup $H$ of $G$.
Finally, the correct analogue of $X$ being a finite aspherical complex is the existence of a character $\gf: H \to \zz$ with $\ker \gf$ of type $FP_n(\F)$.
\begin{Theorem}[\cites{k20a,f24}]\label{t:main}
    Suppose $G$ is a virtually RFRS group that is type $FP_n(\F)$, and suppose $H_i(G; D_{\FG}) = 0$ for $i \le n$.
    Then there is a finite index subgroup $H$ and a map $\gf: H \to \zz$ such that $\ker \gf$ is of type $FP_n(\F)$.
\end{Theorem}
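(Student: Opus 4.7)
My plan is to combine three tools: Sikorav's theorem identifying finiteness properties of $\ker\phi$ with vanishing of Novikov homology, the Malcev--Neumann construction of a skew field from a left-invariant order, and the RFRS chain itself, used to transfer vanishing of $H_*(G;\DG)$ down to Novikov homology at some finite stage of the tower.

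First, replace $G$ by a finite-index subgroup which is RFRS (or residually virtually abelian and locally indicable), and fix a chain $G = G_0 \supset G_1 \supset G_2 \supset \cdots$ with $\bigcap G_i = 1$, each $G_i$ normal of finite index in $G$, and $G_i \onto G_i/G_{i+1}$ factoring through $G_i^{ab}$. Sikorav's theorem then reduces the problem to producing some level $H := G_i$ and some character $\phi: H \to \zz$ for which $H_j(H;\widehat{\FF H}^{\pm\phi}) = 0$ for $j \le n$, where $\widehat{\FF H}^\phi$ is the Novikov completion of $\FF H$ along $\phi$. Since $G$ is residually locally indicable, so is every $G_i$, and being locally indicable they admit left-invariant total orders. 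The Malcev--Neumann construction applied to such an order yields a skew field $\FF((G_i))$ of formal series with well-ordered support, which by Hughes' uniqueness theorem is $\FF G_i$-module isomorphic to $D_{\FF G_i}$. Any character $\phi: G_i \to \zz$ that is compatible with the order embeds $\widehat{\FF G_i}^\phi \hookrightarrow \FF((G_i)) \cong D_{\FF G_i}$, realizing the skew field as an Ore localization of the Novikov ring.

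The hypothesis yields $H_j(G;\DG) = 0$ for $j \le n$; finite-index flatness of $\DG$ over $D_{\FF G_i}$ transfers this to $H_j(G_i; D_{\FF G_i}) = 0$ at every level. The RFRS tower is tailored so that the Novikov rings $\widehat{\FF G_i}^\phi$, as $\phi$ varies over characters factoring through $G_i/G_{i+1}$ and $i \to \infty$, exhaust $\FF((G)) \cong \DG$ in a controlled fashion; the decisive step is to show that for sufficiently generic $\phi$ at a sufficiently deep level $i$ the Novikov chain complex of $G_i$ is already acyclic through degree $n$. I expect this bridging step to be the principal obstacle: forcing vanishing of Novikov homology at a \emph{finite} stage from vanishing over the Malcev--Neumann skew field. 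A ``simpler proof'' should replace Kielak's $L^2$-Betti number convergence argument by a direct algebraic manipulation on chain complexes, exploiting that each element of $\FF((G))$ has well-ordered support whose positive part, truncated modulo a deep enough $G_i$, is contained in the cone of a single character $\phi: G_i \to \zz$, so an acyclic complex over $\FF((G))$ assembles from complexes which are already Novikov-acyclic at finite levels. Sikorav's theorem applied at the resulting $(G_i,\phi)$ then delivers the required finite-index subgroup $H = G_i$ and character $\phi: H \to \zz$ with $\ker\phi$ of type $FP_n(\FF)$.
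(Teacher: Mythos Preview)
Your outline correctly identifies the three ingredients and the overall shape of the argument, and the reduction via Sikorav's criterion matches the paper. But the proposal is not a proof: you explicitly flag the bridging step---passing from vanishing over $D_{\FG}$ to vanishing of Novikov homology at some finite level---as the ``principal obstacle'' and then offer only a heuristic. Once Sikorav's criterion is invoked, that step \emph{is} the theorem, and your sketch (``truncated modulo a deep enough $G_i$, the positive part is contained in the cone of a single character'') does not carry it out.

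There are also technical inaccuracies that would derail an attempted execution. Locally indicable groups admit left orders, not bi-invariant orders, so the Malcev--Neumann field $\FF*_{<}G$ need not exist for $G$ itself; the paper works only with orders on the successive abelian quotients $K_i/K_{i+1}$. Even when $\FF*_{<}G$ does exist, Hughes' theorem does \emph{not} give $\FF*_{<}G \cong D_{\FG}$: rather $D_{\FG}$ is the division closure of $\FG$ inside $\FF*_{<}G$, a much smaller object, and it is not in general an Ore localization of any Novikov ring. Your sketch also does not explain why a single character $\phi$ can be chosen so that both $\FH^{\phi}$ and $\FH^{-\phi}$ simultaneously see the acyclicity, which is what Sikorav's criterion requires.

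The paper's resolution of the bridging step is its new content. It introduces \emph{invariant} Malcev--Neumann rings $R**Q$, obtained by intersecting $(R*_{<}Q)\cap D_{\FG}$ over \emph{all} orders on $Q$, and from these builds an increasing chain of inductive subrings $R_i^0 \subset D_{\FG}$ whose union is proved to equal $D_{\FG}$. Given finitely many elements of $D_{\FG}$ (the entries of change-of-basis matrices putting $D_{\FG}\otimes_{\FG}C_*$ into Smith normal form), one finds a level $m$ with all of them in $R_m^0$, passes to the finite-index normal subgroup $H_m$ for which $H_m/K_m$ is free abelian, and then uses a character-approximation lemma on $\zz^n$ to produce an integral $\psi:H_m\to\zz$ for which those elements lie in $\FF H_m^{\pm\psi}$. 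The intersection over all orders is precisely what makes both signs $\pm\psi$ work at once.
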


This theorem implies Agol's fibering criterion for $3$-manifolds \cite{a08}: if $M$ is a closed aspherical $3$-manifold with virtually RFRS fundamental group, then $M$ has a finite cover which fibers over $S^1$.
Indeed, in this case, work of Dodziuk~\cite{d79} and Lott--L\"{u}ck~\cite{ll95} on the $L^2$-homology of $3$-manifolds, combined with the aforementioned work of Linnell and Schick, imply that $H_1(\pi_1M; D_{\qq \pi_1M}) = 0$, so Theorem \ref{t:main} guarantees a finite index subgroup mapping to $\zz$ with $FP_1(\qq)$ kernel.
Type $FP_1(\qq)$ is equivalent to being finitely generated, and a theorem of Stallings \cite{s61} implies that the corresponding finite cover of $M$ is fibered.

The proof of Theorem \ref{t:main} has three major ingredients.
First, the honest (i.e.
non-virtual) fiberings of $G$ are well understood due to the work of Bieri--Neumann--Strebel--Renz, who defined certain conical subsets $\Sigma^n(G; \zz) \subset H^{1}(G; \rr)$ which control finiteness properties of kernels of homomorphisms from $G$ onto abelian groups.
In particular, a special case of a theorem of Bieri and Renz~\cite{br88}*{Theorem 5.1} shows that for a character $\gf: G \to \zz$, $\ker \gf$ is type $FP_n(\zz)$ if and only if $\pm \gf \in \Sigma^n(G; \zz)$, see also \cite{fgs10}*{Section 7}.

The second ingredient is a connection between BNSR invariants and homology with coefficients in \emph{Novikov rings}, found by Sikorav in his thesis \cite{s87}.
Given a character $\gf: G \to \rr$, the Novikov ring $\FG^\gf$ is defined as
\[
\FG^\gf := \{f: G \to \F | \supp(f) \cap \gf^{-1}(-\infty, t] \text{ is finite for all } t\}.
\]
with multiplication induced from $\F G$.
Sikorav proved that $\gf \in \Sigma^1(G, \zz)$ if and only if $H_1(G; \zz G^{\gf}) = 0$ and Schweitzer \cite{b07a}*{Appendix A.I} extended this to show that $\gf \in \Sigma^n(G, \zz)$ if and only if $H_i(G; \zz G^{\gf}) = 0$ for $i \le n$.
Fisher \cite{f24}*{Theorem 5.3} observed that the ring $\zz$ could be replaced with any ring $R$.

The last ingredient, which is the main discovery of Kielak, is the connection for RFRS groups between vanishing of $H_i(G; D_{\qq G})$ and vanishing of $H_{i}(H; \qq H^{\gf})$ for certain finite index subgroups $H$ and characters $\gf: H \to \zz$.
Roughly speaking, Kielak shows that $D_{\qq G}$ is covered by Novikov rings of finite index subgroups $\qq H^\gf$.
This argument was extended, in somewhat different terms, to other coefficients by Jaikin-Zapirain in the Appendix of \cite{j21}.

Theorem \ref{t:main} follows easily; the vanishing of $H_i(G; D_{\FG})$ for $i \le n$ lets one put the chain complex $C_\ast(G; D_{\FG})$ into a simple Smith normal form up to degree $n$.
If the elements of the change of basis matrices could be thought of as elements of $\FH^\gf$ for an appropriate $\gf: H \to \zz$, then the same matrices would bring $C_\ast(H; \FH^\gf)$ into the same Smith normal form, and we could conclude vanishing of the Novikov homology.

The precise covering argument is rather technical, since one has to compare characters coming from different finite index subgroups, and characters on $H$ do not extend to characters on $G$.

In this paper, we provide an alternative way to compare $D_{\FG}$ and $\FH^\gf$, which we find simpler.
Combining this with the above work relating vanishing of Novikov homology and finiteness properties gives another proof of Theorem \ref{t:main}.
We emphasize that these ideas came from trying to understand Kielak's proof, and the basic tools are the same.

The main tools are \emph{Malcev--Neumann crossed products}.
If $G$ is an ordered group, then Malcev and Neumann independently showed \cites{m48,n49} that the set of functions $f: G \to \F$ with well-ordered support endowed with convolution multiplication becomes a skew field $\F*_{<}G$ containing $\FG$.
In this case, the skew field $D_{\FG}$ is the smallest subfield of $\F*_{<}G$ containing the group ring $\FG$.

Our new idea is an alternative description of $D_{\FG}$ in the case that $G$ admits a normal series $\dots \nsgp K_i \nsgp K_{i-1} \nsgp \dots \nsgp K_1 \nsgp K_0 = G$ with $K_i/K_{i+1}$ orderable amenable and $\bigcap K_i = 1$.
If $G$ and each $G/K_i$ is orderable and the map $G \to G/K_i$ is order-preserving, our description is essentially the same as that given by Eizenbud and Lichtman in \cite{el87}*{Proposition 4.3}, see also \cite{l00}*{Theorem 5}: each element $f$ of $D_{\FG}$ eventually has finite intersection with the $K_i$-cosets, hence can be represented as a well-ordered series on $G/K_i$ with coefficients in $\FK_i$.

In our setting, even if $G$ and each $G/K_i$ are orderable, the subgroups $K_i$ are not necessarily convex, hence we have no order-preserving maps to the $G/K_i$.
Therefore, we work only with orders on the successive quotients $K_i/K_{i+1}$.
Our description is essentially that each element of $D_{\FG}$ can be represented iteratively as well-ordered series in the $K_i/K_{i+1}$, i.e.
each element decomposes as a well-ordered series on $K_0/K_1$ with coefficients which are well-ordered series in $K_1/K_2$ with coefficients which are \dots, eventually terminating in series with coefficients in $\FK_n$ for some $n$.

Though this description is a bit unwieldy, it becomes much more manageable for finitely generated residually (virtually abelian locally indicable) groups, especially if we are willing to pass to finite index subgroups.
In this case, $G$ admits a normal series with each $K_{i}/K_{i+1}$ finitely generated free abelian; in particular passing to finite index subgroups does not change the possible orders on successive quotients.
Even better, for each $n$ there is a finite index normal subgroup $H \nsgp G$ containing $K_{n}$ such that $H/K_{n}$ is free abelian.
If $f \in D_{\FH}$ this description of iterative series is the same as describing $f$ as a well-ordered series on $H/K_{n}$ with coefficients in $\FK_{n}$, with respect to the dictionary order induced from the subgroups $(H \cap K_{i})/(H \cap K_{i+1})$.
This dictionary order can be well-approximated by a character $\psi: H \to \zz$, and this allows us to think of elements in $D_{\FG}$ as matrices over $\FH^\psi$.
Therefore, we obtain a slightly more concrete description of the fibering characters than that given in \cite{k20a}; any sufficiently close approximation to these dictionary orders works.
\begin{Acknowledgements}
    We thank Dawid Kielak and Andrei Jaikin-Zapirain for comments on an earlier version of this article.
    We thank Grigori Avramidi for an illuminating conversation which resulted in Theorem~\ref{t:B}.
    We thank Sam Fisher and Dawid Kielak for providing an argument that residually (virtually abelian and locally indicable) groups are RFRS. We thank the anonymous referee for careful reading and useful comments.
    The second author was supported by the NSF grant DMS-2203325.
\end{Acknowledgements}

\section{Crossed products and Malcev--Neumann series}\label{s:cross}
Let $R$ be a ring with $1$, and $Q$ a group.
Let $c : R^\times \to \Aut(R)$ denote the left action by conjugation $c(r)(r')=r r' r^{-1}$.
Let $\gt : Q \to \Aut(R)$ and $\mu : Q \times Q \to R^\times$ be two functions satisfying
\begin{align}
    \gt(q) \circ \gt(q') &= c(\mu(q,q') ) \circ \gt(qq')\label{e:tau}
    \\
    \mu(q,q') \cdot \mu(qq',q'') &= \gt(q)( \mu(q',q'') ) \cdot \mu(q,q'q'')\label{e:mu}
\end{align}

The functions $\gt$ and $\mu$ are \emph{structure functions}, and turn $\osum_{Q} R$ into a \emph{crossed product ring} $R*Q$ by extending linearly the convolution product
\begin{equation}\label{e:conv}
    r q \cdot r' q' = r \gt(q)(r') \mu(q,q') qq'.
\end{equation}

Playing around with \ref{e:tau} and \ref{e:mu} yields the following lemma.
\begin{lemma}
    Suppose $R*Q$ is a crossed product with structure functions $\mu$ and $\tau$.
    \begin{enumerate}
        \item $\mu(1,1) = \mu(1,q)$.
        \item $\gt(1)=c(\mu(1,1))$.
        \item The identity of $R*Q$ is $\mu(1,1)^{-1}1$.
        \item If $u$ is a unit of $R$, then $uq$ is a unit of $R*Q$.
    \end{enumerate}
\end{lemma}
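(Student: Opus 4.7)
The lemma is a collection of bookkeeping identities derived by substituting identities of $Q$ into \eqref{e:tau} and \eqref{e:mu} and cancelling in $R^\times$. I would prove the parts in the order (2), (1), (3), (4).

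For (2), substitute $q = 1$ into \eqref{e:tau} to obtain $\gt(1) \circ \gt(q') = c(\mu(1,q')) \circ \gt(q')$. Since $\gt(q') \in \Aut(R)$ is invertible, cancelling it on the right yields $\gt(1) = c(\mu(1,q'))$ for \emph{every} $q' \in Q$; specializing to $q' = 1$ gives (2). For (1), substitute $q = q' = 1$ into \eqref{e:mu} and cancel $\mu(1,q'') \in R^\times$ on the right to get $\mu(1,1) = \gt(1)(\mu(1,q''))$. Applying (2) rewrites the right-hand side as $\mu(1,1)\, \mu(1,q'')\, \mu(1,1)^{-1}$, and rearranging gives $\mu(1,1) = \mu(1,q'')$, which is (1).

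For (3), set $e := \mu(1,1)^{-1}\cdot 1$. Expanding $e\cdot(rq)$ by \eqref{e:conv}, then using $\mu(1,q) = \mu(1,1)$ from (1) and $\gt(1)(r) = \mu(1,1)\, r\, \mu(1,1)^{-1}$ from (2), collapses the product to $rq$ after elementary cancellation. For $(rq) \cdot e$, I would first derive $\mu(q,1) = \gt(q)(\mu(1,1))$ by setting $q' = q'' = 1$ in \eqref{e:mu} and cancelling; then $\gt(q)(\mu(1,1)^{-1}) \cdot \mu(q,1) = \gt(q)(\mu(1,1)^{-1}\mu(1,1)) = 1$, so expanding $(rq)\cdot e$ via \eqref{e:conv} yields $rq$.

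For (4), I would produce explicit one-sided inverses of $uq$ of the form $rq^{-1}$. The equation $(uq)(rq^{-1}) = e$ becomes $u\, \gt(q)(r)\, \mu(q,q^{-1}) = \mu(1,1)^{-1}$, which is solvable for $r \in R^\times$ since every factor on the right lies in $R^\times$ and $\gt(q)$ is an automorphism of $R$; a symmetric computation produces a left inverse of the same form. The whole argument is mechanical, and the only thing to watch is that $R$ need not be commutative: I have to respect the order of multiplication throughout, and the cancellations require remembering that $\mu$ takes values in $R^\times$ and that each $\gt(q)$ is an automorphism.
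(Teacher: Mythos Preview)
Your proof is correct. The paper itself gives no proof, merely the remark that the lemma follows from ``playing around with \eqref{e:tau} and \eqref{e:mu}''; your argument carries out exactly this exercise, making the substitutions and cancellations explicit, so there is nothing to compare.
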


An equivalent definition is the following.
A ring $S$ is $Q$-\emph{graded} if $S=\osum_{q \in Q} R_{q}$ as abelian groups and the grading is compatible with multiplication: $R_{q}R_{q'} \subset R_{qq'}$.
Note that this implies that $R=R_{1}$ is a subring.
If in addition, each $R_{q}$ contains a unit of $S$, then $R_{q}$ is an $R$-module $R$-isomorphic to $R$ and $S$ is a crossed product of $R$ and $Q$, $S=R*Q$.

Clearly the structure maps definition gives a $Q$-grading, and the above lemma implies that each $R_{q}$ contains a unit.

In the other direction, pick units $s_{q} \in R_{q}$ and define:
\begin{align*}
    \gt(q) &= c(s_{q}) \\
    \mu(q,q') &= s_{q} s_{q'} s_{qq'} ^{-1}.
\end{align*}
We will always choose $s_{1}=1$ such that $1\cdot 1$ is 1 of $R*Q$.
Then $R$ embeds into $R*Q$ via the map $r \mapsto r 1$,

Suppose $K \nsgp G$ and $Q=G/K$.
The graded definition obviously applies to the decomposition of the group ring $RG$ over cosets of $K$ and gives an isomorphism $R G \cong RK*Q$.
Moreover, we can choose units $s_{q}$ to be given by a set-theoretic section $s$ of $G \to Q$.
The inverse isomorphism $R K*Q \cong R G$ is given by
\[
\sum_{q \in Q} x_{q} q \mapsto \sum_{q \in Q} x_{q} s_{q},
\]
where the coefficients $x_{q}\in R K$.
Note that the image of $\tau$ has only automorphisms induced by conjugation by elements of $G$, and the image of $\mu$ lies in $K \subset RK^\times$.
\begin{lemma}[\cite{p89}*{Lemma 1.3}]\label{l:decomp}
    Let $K \nsgp G \onto Q $.
    Given a crossed product $R*G$,
    \[
    R*G \cong (R*K)*Q
    \]
\end{lemma}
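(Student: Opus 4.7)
The plan is to use the equivalent graded definition of a crossed product given just above the lemma, and simply regroup the $G$-grading by cosets of $K$. Write the given crossed product as $R*G = \bigoplus_{g \in G} R_g$, with each $R_g$ an $R$-bimodule containing a unit $s_g$ of $R*G$, and let $\pi : G \onto Q$ be the quotient. For each $q \in Q$, I would define
\[
    S_q := \bigoplus_{g \in \pi^{-1}(q)} R_g,
\]
so that $R*G = \bigoplus_{q \in Q} S_q$ as an $R$-bimodule (indeed as an abelian group).

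Next I would check the two defining properties of a graded crossed product. For the multiplicative grading $S_q \cdot S_{q'} \subset S_{qq'}$, it suffices to observe that the $G$-grading satisfies $R_g \cdot R_{g'} \subset R_{gg'}$, and normality of $K$ gives $\pi^{-1}(q) \cdot \pi^{-1}(q') = \pi^{-1}(qq')$, so the product of any two graded components in $S_q$ and $S_{q'}$ lands in a component of $S_{qq'}$. Taking $q=q'=1$, the identity component $S_1 = \bigoplus_{k \in K} R_k$ inherits from $R*G$ a $K$-grading in which each $R_k$ contains the unit $s_k$; this is precisely the graded crossed product $R*K$. Finally, each $S_q$ contains a unit of $R*G$: choose any lift $g \in \pi^{-1}(q)$ and take $s_g \in R_g \subset S_q$.

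By the equivalence of the two definitions recorded in the excerpt, these properties identify $R*G$ with the crossed product $(R*K)*Q$, where the structure functions are read off by picking units $s_q \in S_q$ (concretely, one may take $s_q := s_{\sigma(q)}$ for any set-theoretic section $\sigma : Q \to G$ with $\sigma(1)=1$). I do not expect a real obstacle here — the content is purely organizational, and the only substantive input is the normality of $K$, used precisely to make the coarsening of the grading well-defined on the level of multiplication.
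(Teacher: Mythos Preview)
Your proof is correct and follows essentially the same approach as the paper: both regroup the $G$-grading of $R*G$ into $K$-cosets to obtain a $Q$-grading with base ring $S_1 = R*K$, then check that each graded piece contains a unit (via a set-theoretic section of $G \to Q$). The paper phrases this as $R*G = \bigoplus_{q \in Q} (R*K) g_q$ after choosing a section $g : Q \to G$, which is exactly your $S_q$ written in a slightly different notation.
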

\begin{proof}
    Choose a section $g:Q \to G$.
    Then $G$ is a disjoint union of $K$-cosets and
    \[
    R*G=\bigoplus_{q \in Q} (R*K) g_{q}
    \]
    is the $Q$-grading.
    In other words, for a single term, $r (kg_{q}) = (r k) q$.
    In $R*G$ we have $(r\, kg_{q})(r\,'k'g_{q'})=r''\, kg_{q} k'g_{q'} $, and together with multiplication in $G$, $ kg_{q} k'g_{q'} = k'' g_{qq'}$, hence this $Q$-grading is compatible with multiplication in $R*G$.
\end{proof}

In terms of the structure maps $\gt,\gm$ for $R*G$, the structure maps for $(R*K)*Q$ are given by
\begin{align*}
    \hat\gt(q)(rk) &=\gt(g_{q})(r)\, g_{q} k g_{q}^{-1}, \\
    \hat\gm(q,q') &=\gm(g_{q}, g_{q'}) \gm(k, g_{qq'})^{-1}\, k ,\\
    \intertext{where} k &=g_{q} g_{q'} g_{qq'}^{-1} \in K.
\end{align*}

\subsection{Orderable groups} Recall that a group $G$ is \emph{orderable} if it admits a strict total ordering that is invariant under left and right multiplication.
A subgroup $C$ of an ordered group $G$ is \emph{convex} if $x,y \in C$ and $x < z < y$ implies $z \in C$.
If $C$ is convex and normal, there is an induced order on $G/C$, where $xC < yC$ if and only if $x< y$, hence the quotient map $G \to G/C$ is order-preserving.
Convex subgroups of $G$ are linearly ordered by inclusion, so a maximal proper convex subgroup $C$, if it exists, is unique, and hence normal.
In this case $G/C$ has no nontrivial proper convex subgroups.
This is equivalent to the induced order on $G/C$ being \emph{Archimedean}: for any $1 < x, y$ there is $n \in \nn$ such that $x < y^n$.
H\"{o}lder's theorem \cite{h01} implies that Archimedean orders are induced by injective characters to $\rr$.

If $G$ is generated by elements $1<g_{1}<\dots < g_{n}$, then a convex subgroup containing $g_{n}$ contains all generators, and hence is $G$.
Therefore, the union of convex subgroups not containing $g_n$ is the maximal proper convex subgroup, and we obtain the following:
\begin{lemma}\label{l:op}
    If $G$ is a finitely generated ordered group, then there is an order-preserving character $\gf: G \to \rr$ such that $\ker \gf$ is the maximal proper convex subgroup $C$ of $G$.
\end{lemma}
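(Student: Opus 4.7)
The plan is to directly carry out the sketch indicated in the paragraph preceding the lemma, and then apply H\"older's theorem to the Archimedean quotient.

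First, I would normalize the generating set: after replacing any $g_i<1$ by $g_i^{-1}$ and reordering, we may assume $1<g_1<\dots<g_n$, so that $g_n$ is the largest generator and $g_n^{-1}\le g_i^{\pm 1}\le g_n$ for every $i$. Next, I would construct the convex subgroup $C$. Since the collection of convex subgroups of an ordered group is linearly ordered by inclusion, the family $\cs$ of convex subgroups of $G$ not containing $g_n$ is a chain, and I would take $C:=\bigcup_{H\in\cs} H$. This is convex (a nested union of convex subgroups is convex) and avoids $g_n$, hence is a proper convex subgroup that is maximal among those not containing $g_n$.

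The main step is to upgrade this to being \emph{maximal proper} convex. Suppose $C\subsetneq C'\subseteq G$ with $C'$ convex; by maximality of $C$ we must have $g_n\in C'$. Given an arbitrary $g\in G$, write $g$ as a word $g=g_{i_1}^{\epsilon_1}\cdots g_{i_\ell}^{\epsilon_\ell}$ of length $\ell$ in the generators and their inverses. Using left- and right-invariance of the order together with $g_n^{-1}\le g_i^{\pm 1}\le g_n$, one obtains $g_n^{-\ell}\le g\le g_n^\ell$. Since $C'$ is a subgroup containing $g_n$, it contains $g_n^{\pm\ell}$, and convexity of $C'$ forces $g\in C'$. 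Hence $C'=G$, so $C$ is maximal proper convex. This bounding step is the small technical core of the argument.

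Finally, I would push the order down to $G/C$. The induced order on $G/C$ has no nontrivial proper convex subgroup, because any such would pull back to a convex subgroup strictly between $C$ and $G$, contradicting the previous step. By H\"older's theorem \cite{h01}, an ordered group with no nontrivial proper convex subgroup is Archimedean and embeds order-preservingly into $\rr$; write $\bar\phi:G/C\hookrightarrow\rr$ for this embedding. Composing with the quotient map yields an order-preserving character $\phi:G\to G/C\to\rr$ with $\ker\phi=C$, as required. The only mild obstacle is the ``word-length trapping'' in the convexity step; everything else is a direct appeal to general facts about ordered groups and to H\"older's theorem.
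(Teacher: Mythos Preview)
Your proposal is correct and follows precisely the approach the paper sketches in the paragraph preceding the lemma: take the maximal convex subgroup missing the largest generator $g_n$, show via the word-length bound $g_n^{-\ell}\le g\le g_n^{\ell}$ that any strictly larger convex subgroup is all of $G$, and then apply H\"older's theorem to the resulting Archimedean quotient. You have simply filled in the details the paper leaves implicit.
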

\begin{lemma}\label{l:fio}
    If $Q$ is torsion-free abelian and $Q' < Q$ has finite index, then the orders of $Q$ are in bijection with the orders on $Q'$.
\end{lemma}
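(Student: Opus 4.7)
The plan is to package an order on a torsion-free abelian group $Q$ as its positive cone $P \subset Q$, a subsemigroup satisfying $Q = P \sqcup \{1\} \sqcup P^{-1}$, and to produce a two-sided inverse to the restriction map $P \mapsto P \cap Q'$ by extracting roots. Throughout I will use the elementary fact that in any torsion-free ordered group one has $x \in P$ if and only if $x^k \in P$ for every $k \geq 1$; this is the engine of both directions.

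For injectivity, I would set $n := [Q:Q']$, so that $x^n \in Q'$ for every $x \in Q$. By the power-invariance fact just stated, the sign of $x$ in any order on $Q$ is determined by the sign of $x^n \in Q'$. Hence two orders on $Q$ that agree on $Q'$ must agree on every $n$-th power in $Q$, and therefore on all of $Q$.

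For surjectivity, starting from a positive cone $P'$ on $Q'$, I would set
\[
P := \{\, x \in Q : x^k \in P' \text{ for some } k \geq 1 \,\}
\]
and verify three properties: trichotomy, closure under the group operation, and $P \cap Q' = P'$. Trichotomy holds because for $x \neq 1$ the element $x^n$ lies in $Q' \setminus \{1\}$ by torsion-freeness, hence in $P'$ or $(P')^{-1}$. Disjointness $P \cap P^{-1} = \emptyset$ is handled by passing to a common multiple of exponents: if $x^k, x^{-\ell} \in P'$, then $x^{k\ell}$ lies in $P' \cap (P')^{-1} = \emptyset$, a contradiction. Closure under multiplication uses commutativity via $(xy)^{k\ell} = x^{k\ell} y^{k\ell}$. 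Finally, $P \cap Q' = P'$ follows from the same power-invariance fact used for injectivity.

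The argument is essentially bookkeeping once positive cones are introduced, so I anticipate no real obstacle. The one point to keep in view is that commutativity is genuinely required in the closure-under-multiplication step, so the proof does not immediately generalize beyond the abelian setting; within the abelian hypothesis, however, everything reduces cleanly to the observation that taking powers is order-preserving in a torsion-free ordered group.
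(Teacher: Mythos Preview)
Your proof is correct and follows essentially the same approach as the paper: both use the $n$-th power map (with $n=[Q:Q']$) to transfer orders between $Q$ and $Q'$, relying on commutativity via $(xy)^n = x^n y^n$ and torsion-freeness to make this well-defined. You package things in terms of positive cones and fill in more detail, but the underlying argument is the same as the paper's one-line construction $x <_Q y \iff x^n <_{Q'} y^n$.
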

\begin{proof}
    An order on $Q$ induces an order on $Q'$ by restriction.
    In the other direction, set $n = [Q:Q']$.
    For any ordering $<_{Q'}$ on $Q'$, define an ordering on $Q$ by
    \[
    x <_Q y \text{ if and only if } x^n <_{Q'} y^n .
    \]
    Then $<_Q$ is an ordering since $(xz)^n = x^nz^n$, and it is easy to check that these procedures are inverses of each other.
\end{proof}
Most of the time, we only need to consider orderings on a finitely generated free abelian group $Q$.
We will need the following general lemma about integral characters on such groups.
\begin{lemma}\label{l:character}
    Let $<$ be an order on $\zz^n$, and $y_1 < y_2 < \dots < y_k$ a finite collection of elements of $\zz^n$.
    Then there is a character $\gf: \zz^n \to \zz$ such that $\gf(y_1) < \gf(y_2) < \dots <\gf(y_k)$.
\end{lemma}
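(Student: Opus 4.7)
The plan is to reduce the statement to the following: find an integer character $\phi: \zz^n \to \zz$ with $\phi(f_i) > 0$ for every $i$, where $f_i := y_{i+1} - y_i \in \zz^n$ for $1 \leq i \leq k-1$. This reduction is immediate, as the chain $\phi(y_1) < \phi(y_2) < \dots < \phi(y_k)$ is equivalent to $\phi(f_i) > 0$ for all $i$, and by hypothesis each $f_i$ is positive with respect to the given order on $\zz^n$.

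The key step is to show that $0 \in \rr^n$ does not lie in the convex hull of $F := \{f_1, \dots, f_{k-1}\}$. Suppose for contradiction that $0 = \sum_{i=1}^{k-1} \lambda_i f_i$ for some real numbers $\lambda_i \geq 0$ with $\sum \lambda_i = 1$. The set of nonnegative real solutions to the homogeneous linear system $\sum \lambda_i f_i = 0$ is a rational polyhedral cone in $\rr^{k-1}$ (being cut out by rational linear equations and the inequalities $\lambda_i \geq 0$), so if it contains a nonzero point then it contains a nonzero point with nonnegative integer coordinates $(n_1, \dots, n_{k-1})$. For such $n_i$ we would have $\sum n_i f_i = 0$ in $\zz^n$; however each $f_i > 0$ in the order on $\zz^n$, and since at least one $n_i$ is strictly positive and positive elements are closed under addition, $\sum n_i f_i > 0$, a contradiction.

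Having shown $0 \notin \mathrm{conv}(F)$, the hyperplane separation theorem (applied to the compact convex set $\mathrm{conv}(F)$ and the point $\{0\}$) produces a real linear functional $\psi: \rr^n \to \rr$ with $\psi(f_i) > 0$ for all $i$. Strict positivity on the finite set $F$ is an open condition on $\psi \in (\rr^n)^*$, so there is a rational functional arbitrarily close to $\psi$ satisfying the same inequalities; clearing denominators yields the desired integer character $\phi: \zz^n \to \zz$, and $\phi(f_i) > 0$ translates back to $\phi(y_1) < \phi(y_2) < \dots < \phi(y_k)$. The only real obstacle is the convex-geometric input that $0 \notin \mathrm{conv}(F)$; once that is in hand, the separation and approximation are routine.
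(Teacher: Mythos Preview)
Your proof is correct and takes a genuinely different route from the paper's. The paper argues by induction on $n$: it uses the maximal proper convex subgroup $K$ of $(\zz^n,<)$ to write $\zz^n = Q \oplus K$ with the lexicographic order, applies the inductive hypothesis to obtain characters $\phi_Q$ and $\phi_K$ separating the projections of the $y_i$ to $Q$ and $K$ respectively, and then sets $\phi = N\phi_Q + \phi_K$ for $N \gg 0$. Your argument instead recasts the problem as a linear feasibility question---find $\phi$ with $\phi(f_i)>0$ for the positive differences $f_i=y_{i+1}-y_i$---and resolves it via a Gordan/Farkas-type alternative: either such a functional exists, or $0$ lies in the convex hull of the $f_i$; you rule out the latter by passing from a real to an integer nonnegative combination (using that the relevant cone is rational polyhedral) and invoking that the positive cone of an ordered group is closed under addition. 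Your approach is shorter and uses nothing about orders on $\zz^n$ beyond closure of the positive cone under nonnegative integer combinations; the paper's approach meshes with the surrounding material, where the convex-subgroup filtration and order-preserving characters (Lemma~\ref{l:op}) are already in use.
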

\begin{proof}
    We induct on $n$.
    There is an order-preserving nontrivial character to $\rr$.
    If this character is injective, then, since preserving the order of the $y_i$ is an open condition, we can perturb it slightly to land in $\qq$ and hence to have image isomorphic to $\zz$.
    Otherwise, the kernel is a nontrivial proper convex subgroup $K$, and we get an exact sequence
    \[
    0 \to K \to \zz^n \to Q \to 0
    \]
    of ordered free abelian groups and order-preserving maps.

    After choosing a splitting, write $\zz^n = Q \oplus K$, and decompose our elements $y_i$ as $(q_i, k_i)$.
    Independent of the splitting we have
    \[
    y_i < y_j \text{ if and only if } q_i < _Q q_j \text{ or } q_i = q_j \text{ and } k_i <_K k_j.
    \]
    By induction, there are characters $\gf_Q: Q \to \zz$ and $\gf_K: K \to \zz$ which strictly preserve the orderings of the $q_i$ and $k_i$ respectively.

    Our homomorphism $\gf$ is defined as
    \[
    \gf = N \gf_Q + \gf_K \text{ for } N \gg 0.
    \]

    If $(q_i, k_i) < (q_j, k_j)$, we need to check that $\gf(q_i, k_i) < \gf(q_j, k_j)$.
    If $q_i = q_j$, then $k_i <_K k_j$, so we are done since $\gf_K(k_i) < \gf_K(k_j)$.
    If $q_i <_{Q} q_j$ then $N\gf_Q(q_i) <_Q N\gf_Q(q_j)$ and adding $\gf_K$ preserves this inequality for $N$ sufficiently large.
\end{proof}

\subsection{Malcev--Neumann crossed products} Suppose that $Q$ is orderable, fix an order $<$ on $Q$, and suppose we have a crossed product $R*Q$.
Let $R*_{<}Q$ denote the set of functions $Q \to R$ with well-ordered support.
We will often think of such a function $f$ as a formal series $f=\sum_{q \in Q} f_{q}q$ with coefficients in $R$.
The \emph{leading coefficient} of $f$ is the value of $f$ on the minimal element $q_{\min}$ of its support, and the \emph{leading term} is the term $f_{q_{\min}} q_{\min}$.
\begin{lemma}\label{l:woprop}
    \cite{p89}*{Lemmas 2.9, 2.10} The well-ordered subsets of $Q$ have the following properties:
    \begin{enumerate}
        \item\label{i:subset}
        Any subset of a well-ordered subset is well-ordered.
        \item\label{i:union}
        The union of two well-ordered subsets is well-ordered.
        \item\label{i:product}
        The product (using the multiplication in $Q$) of two well-ordered subsets is well-ordered.
        \item\label{i:finiteness}
        Every element of the product has only finitely many representations as a product of two elements from the subsets.
        \item\label{i:powers}
        If $P>1$ is a positive well-ordered subset, then $\bigcup_{n=1}^{\infty} P^{n}$ is well-ordered.
    \end{enumerate}
\end{lemma}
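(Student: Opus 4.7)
My overall plan is to prove the five items in order, since the later ones build on the earlier. The key tool throughout is the characterization that a totally ordered set is well-ordered if and only if it admits no infinite strictly descending sequence, together with its standard ``peak'' consequence that any sequence in a well-ordered set has a non-decreasing subsequence. Items (1) and (2) are then formal: a nonempty subset of a well-ordered set inherits a minimum; and for a nonempty $X \subset A\cup B$ the minimum of $X$ is the smaller of the minima of its (nonempty) intersections with $A$ and $B$.

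For (3), I assume toward contradiction that $(a_n b_n)$ is an infinite strictly descending sequence in $AB$, pass to a subsequence on which $a_n$ is non-decreasing, and invoke bi-invariance of the order on $Q$: from $a_n b_n > a_{n+1} b_{n+1}$ together with $a_n \le a_{n+1}$ one deduces $b_n > b_{n+1}$, contradicting well-ordering of $B$. For (4), given infinitely many factorizations $x = a_i b_i$ with distinct pairs, the $a_i$ are pairwise distinct (since each $a_i$ determines $b_i = a_i^{-1}x$), so extracting a non-decreasing subsequence yields a strictly increasing sequence of $a_i$, and bi-invariance then forces the $b_i$ strictly decreasing, again a contradiction.

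Item (5) is the main obstacle. Induction together with (3) shows that each $P^n$ is well-ordered, but this does not by itself give well-ordering of the union. My plan is to invoke Higman's lemma on finite sequences from the well-quasi-ordered set $P$: any infinite list of tuples $(p_{i,1},\dots,p_{i,k_i})$ with $p_{i,s} \in P$ admits indices $i<j$ and an order-preserving injection $l:\{1,\dots,k_i\}\to\{1,\dots,k_j\}$ with $p_{i,s}\le p_{j,l(s)}$ for all $s$. I then upgrade this termwise inequality to a product inequality using bi-invariance and the hypothesis $P > 1$: telescoping along $l$ replaces each $p_{i,s}$ by $p_{j,l(s)}$ without decreasing the product, and inserting the unmatched factors of the longer tuple, each $> 1$, can only increase it further. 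Applied to a hypothetical infinite descending sequence in $\bigcup_n P^n$, writing each term as a product of elements of $P$, this produces $x_i \le x_j$ for some $i<j$, the desired contradiction. The step I expect to be the most delicate is the product-inequality upgrade, since it requires applying left- and right-invariance in the right order along the telescope.
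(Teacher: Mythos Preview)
Your proposal is correct. Note, however, that the paper does not supply its own proof of this lemma: it is stated with a citation to Passman's book and used as a black box. So there is no in-paper argument to compare against directly.

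That said, it is worth comparing your route for item~(5) with the classical one in Passman. Passman (following Neumann) proves that $\bigcup_{n\ge 1} P^n$ is well-ordered by a \emph{minimal bad sequence} argument: assuming an infinite descending chain exists, one constructs a lexicographically minimal such chain, strips a leading $P$-factor from each term, and obtains a smaller bad sequence, contradicting minimality. This is exactly the Nash--Williams proof of Higman's lemma specialised to the present setting. Your approach inverts the packaging: you invoke Higman's lemma as a single citation (legitimately, since a well-ordered $P$ is in particular well-quasi-ordered) and then reduce the problem to the elementary product-inequality upgrade. Your telescoping argument for that upgrade is correct; an equivalent and perhaps slicker phrasing is to pad the shorter tuple with $1$'s at the positions outside $\operatorname{im}(l)$, observe that the padded tuple is termwise $\le$ the longer one, and then use the general fact (immediate from bi-invariance, replacing one factor at a time) that $a_s \le b_s$ for all $s$ implies $a_1\cdots a_k \le b_1\cdots b_k$ in any ordered group. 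Either way the two approaches have the same content; yours trades the bespoke minimal-bad-sequence construction for a standard combinatorial lemma plus a short order-theoretic computation.
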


Properties \eqref{i:subset} and \eqref{i:union} imply that $R*_{<}Q$ is an abelian group under point-wise addition.
Property \eqref{i:finiteness} allows one to take the convolution of two such functions using formula \eqref{e:conv}, and \eqref{i:product} and \eqref{i:subset} show that $R*_{<}Q$ is closed under this convolution, so $R*_{<}Q$ is a ring, which we will call the \emph{Malcev--Neumann crossed product}.
Note that $R*Q$ is a subring of $R*_{<}Q$.

Novikov rings, as defined in the introduction, are special cases of Malcev--Neumann crossed products by choosing $Q=\im \gf$, $K=\ker \gf$ and $R=\FK$, for $\gf : G \to \rr$ a character.

\subsection{Extensions and subgroups} Suppose now that $K \nsgp G \onto Q$ with $Q$ orderable.
Suppose that $R$ is a ring containing $\FK$ such that the conjugation $G$-action on $\FK$ extends uniquely to $R$; we say such a ring is $G$-invariant.
\begin{lemma}\label{l:MN}
    For any order $<$ on $Q$,
    \begin{enumerate}
        \item\label{i:emb}
        The group ring $\FG=\FK*Q$ embeds into a Malcev--Neumann crossed product $R*_{<} Q$.
        \item\label{i:lead}
        If $R$ has no zero divisors, then an element $f \in R*_{<} Q$ is invertible if and only if the leading coefficient of $f$ is a unit in $R$.
        \item\label{i:field}
        If $R$ is a $G$-invariant skew field, then $R*_{<}Q$ is a skew field.
    \end{enumerate}
\end{lemma}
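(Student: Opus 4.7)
The plan is to prove the three assertions in order.

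For (\ref{i:emb}), I would apply Lemma~\ref{l:decomp}: fixing a set-theoretic section $g: Q \to G$, we have $\FF G \cong \FK * Q$ with structure functions $\gt(q) = c(g_q)$ and $\gm(q,q') = g_q g_{q'} g_{qq'}^{-1} \in K$. Since $R$ is $G$-invariant, each $\gt(q)$ extends uniquely to an automorphism of $R$, and $\gm$ continues to take values in $K \subset R^\times$. The same formulas therefore define a crossed product $R * Q$, and the inclusion $\FK \hookrightarrow R$ induces embeddings $\FF G = \FK * Q \hookrightarrow R * Q \hookrightarrow R *_{<} Q$ (the last being trivial since finitely supported functions have well-ordered support).

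For (\ref{i:lead}), my approach is the classical geometric series argument. Let $f \in R *_{<} Q$ have leading term $r_0 q_0$. If $r_0 \in R^\times$, then $u := r_0 q_0$ is a unit of $R * Q$, and we may write $u^{-1} f = 1 - y$ with $\supp(y) \subset Q_{>1}$. To see that $\sum_{n \geq 0} y^n$ converges pointwise to an element of $R *_{<} Q$, Lemma~\ref{l:woprop}(\ref{i:powers}) gives that $\bigcup_n \supp(y)^n$ is well-ordered, and iterating Lemma~\ref{l:woprop}(\ref{i:finiteness}) along with the positivity $\supp(y) > 1$ yields the classical Malcev--Neumann fact that each element of $\bigcup_n \supp(y)^n$ admits only finitely many representations as a product of elements of $\supp(y)$ over all lengths. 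This gives $f^{-1} = (\sum_n y^n) u^{-1}$. Conversely, if $f$ is invertible with $f^{-1}$ having leading term $s q_0'$, then the leading term of $f f^{-1}$ is $r_0 \, \gt(q_0)(s) \, \gm(q_0,q_0') \, q_0 q_0' = 1$, from which one reads off that $r_0 \in R^\times$.

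Part (\ref{i:field}) follows at once: if $R$ is a skew field, then every nonzero element of $R$ is a unit, so every nonzero $f \in R *_{<} Q$ has unit leading coefficient and is invertible by (\ref{i:lead}).

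The only technical subtlety is the finite-representations assertion used in (\ref{i:lead}), which strengthens Lemma~\ref{l:woprop}(\ref{i:powers}) and is the heart of why the Malcev--Neumann crossed product is well-defined as a ring; I would quote this from a standard reference (such as Passman) rather than reprove it. Everything else is routine bookkeeping with leading terms.
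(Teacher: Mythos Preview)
Your proposal is correct and follows essentially the same approach as the paper's proof: extend the structure maps via $G$-invariance for \eqref{i:emb}, use the geometric series $1+P+P^2+\cdots$ together with Lemma~\ref{l:woprop}\eqref{i:powers} for the sufficiency in \eqref{i:lead} and the multiplicativity of leading terms for necessity, and deduce \eqref{i:field} immediately. The paper is terser and does not separately flag the finite-representations point you raise, taking it as part of the well-definedness of multiplication in $R*_{<}Q$.
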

\begin{proof}
    \eqref{i:emb} Since $R$ is invariant we can change the range of the structure maps $\gt$ and $\mu$ of the crossed product $\FG=\FK*Q$ to $\Aut(R)$ and $R^{\times}$, respectively, and they will still satisfy the required equations.
    Thus we obtain a crossed product $R*Q > \FG$, and then we further extend to the Malcev--Neumann crossed product $R*_{<} Q$.

    \eqref{i:lead} Since $R$ has no zero divisors, the leading term of a product $fg$ is the product of the leading terms of $f$ and $g$, so if $f$ is invertible then the leading term and hence the leading coefficient are units.
    Conversely, if the leading coefficient is a unit then the leading term is invertible, so we can factor it out from $f$ to obtain a series of the form $1-P$ for $P$ a series with positive support, and the inverse of that is the geometric series $1+P+P^2+\dots$ by Lemma \ref{l:woprop} \eqref{i:powers}.

    \eqref{i:field} Follows from \eqref{i:lead}.
\end{proof}
We have the following version of Lemma~\ref{l:decomp}.
\begin{lemma}\label{l:MNdecomp}
    Let $Q' \nsgp Q$ be a normal subgroup.
    Suppose $Q$ is ordered, $Q'$ has the induced order, and we are given a crossed product $R*Q$.
    \begin{enumerate}
        \item If $Q/Q'$ is finite, then
        \[
        R*_{<}Q \cong (R*_{<}Q') * Q/Q'.
        \]
        \item If $Q/Q'$ has an order such that $Q\to Q/Q'$ is order-preserving, then
        \[
        R*_{<}Q \cong (R*_{<}Q') *_{<} Q/Q'.
        \]
    \end{enumerate}
\end{lemma}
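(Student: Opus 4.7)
The plan is to upgrade the crossed-product isomorphism of Lemma~\ref{l:decomp} to the setting of well-ordered supports, by partitioning the support of an element of $R*_<Q$ over the $Q'$-cosets. Fix a section $s : Q/Q' \to Q$, so that by Lemma~\ref{l:decomp} any $f$ in $R*Q$ has a unique expression $f = \sum_{q \in Q/Q'} f_q \cdot s(q)$ with $f_q \in R*Q'$, and the ring operations on both sides of the desired equality are computed by the same formula \eqref{e:conv}. For $f \in R*_<Q$, the component $f_q$ is supported on a translate of a subset of the well-ordered set $\supp f$, so by Lemma~\ref{l:woprop}\eqref{i:subset} each $f_q$ already lies in $R*_<Q'$. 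Hence the only issue is whether the indexing set $\{q : f_q \ne 0\}$ satisfies the finiteness or well-orderedness required on the right.

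In part (1), since $|Q/Q'| < \infty$, the sum is automatically finite, so $f$ lies in $(R*_<Q')*(Q/Q')$. Conversely, a finite sum $\sum_q g_q s(q)$ has support $\bigcup_q \supp(g_q) \cdot s(q) \subset Q$, a finite union of well-ordered subsets of $Q$ (right multiplication by $s(q)$ preserves the order on $Q$), hence well-ordered by Lemma~\ref{l:woprop}\eqref{i:union}.

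In part (2), the forward direction requires showing that $\{q : f_q \ne 0\} \subset Q/Q'$ is well-ordered. This set is contained in $\pi(\supp f)$, where $\pi : Q \to Q/Q'$ is the projection, so I would prove the general fact that an order-preserving surjection sends well-ordered sets to well-ordered sets: for any nonempty $T \subset \pi(\supp f)$, the preimage $\pi^{-1}(T) \cap \supp f$ has a least element $x_0$, and for any $t = \pi(y) \in T$ with $y \in \supp f$, $x_0 \le y$ and order-preservation of $\pi$ force $\pi(x_0) \le t$, so $\pi(x_0) = \min T$. For the reverse direction, given $g = \sum_q g_q s(q)$ in $(R*_<Q')*_<(Q/Q')$, I show $\supp g = \bigsqcup_q \supp(g_q) \cdot s(q) \subset Q$ is well-ordered by verifying that any nonempty $S \subset \supp g$ has a least element: let $q_0 = \min \pi(S)$, let $m$ be the minimum of the nonempty set $S \cap s(q_0)Q'$ (well-defined since this is a subset of the well-ordered translate $\supp(g_{q_0}) \cdot s(q_0)$), and observe that any $x \in S$ in a coset with $\pi(x) > q_0$ cannot satisfy $x < m$, for otherwise order-preservation of $\pi$ would give $\pi(x) \le q_0$, a contradiction.

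The main obstacle is part (2), where well-orderedness of supports in $Q$ must be matched with the joint well-orderedness of pairs $(q, q') \in Q/Q' \times Q'$. Both directions require the hypothesis that $\pi$ is order-preserving -- equivalently, that $Q'$ is convex -- once to push a well-ordered set in $Q$ down to $Q/Q'$, and once in contrapositive form to rule out elements of strictly higher cosets from falling below a local minimum.
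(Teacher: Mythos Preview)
Your argument is correct and follows essentially the same route as the paper: both reduce to the order-theoretic fact that a subset of $Q$ is well-ordered if and only if its intersection with each $Q'$-coset is well-ordered and (in case~(2)) its projection to $Q/Q'$ is well-ordered. The paper simply asserts this equivalence; you have supplied the details in both directions.
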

\begin{proof}
    This is immediate from the form of the structure maps for $R*Q = (R*Q')*Q/Q'$ in Lemma~\ref{l:decomp} and the fact that a subset of $Q$ is well-ordered if and only if its intersection with each of the finitely many cosets (in the first case) is well-ordered, or (in the second case) both the intersections with cosets and the projection to $Q/Q'$ are well-ordered.
\end{proof}
\begin{lemma}\label{l:sum}
    Suppose $G=K\times Q$, $Q$ is ordered, $K$ is finite, and we are given a crossed product $R*G$.
    Then
    \[
    (R*K)*_{<}Q \cong (R*_{<}Q)*K.
    \]
\end{lemma}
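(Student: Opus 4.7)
The plan is to identify both rings as rings of formal sums indexed by $G$ satisfying the same support condition, and then observe that both multiplications come from the underlying convolution on $R*G$.

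Since $G = K\oplus Q$, Lemma~\ref{l:decomp} gives two descriptions $R*G = (R*K)*Q = (R*Q)*K$, using the inclusions $Q\hookrightarrow G$ and $K\hookrightarrow G$ as sections. Because $K$ and $Q$ commute in $G$, the formulas for $\hat\tau$ and $\hat\mu$ displayed after Lemma~\ref{l:decomp} collapse: in each decomposition $\hat\tau$ only twists the $R$-coefficient and leaves the $K$- and $Q$-labels alone, and the group-valued correction from $\hat\mu$ is trivial. In particular, the $K$-action on $R*Q$ preserves the $Q$-support of every element, so it extends uniquely to an action on $R*_{<}Q$, and $(R*_{<}Q)*K$ is well-defined.

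Next, I would identify the underlying abelian groups explicitly. An element of $(R*K)*_{<}Q$ is a well-ordered sum $\sum_q f_q\, q$ with $f_q \in R*K$; writing $f_q = \sum_k r_{k,q}\, k$ and using $kq = qk$ in $G$ presents it as a formal sum $\sum_{k,q} r_{k,q}(kq)$ whose $Q$-projected support is well-ordered. An element of $(R*_{<}Q)*K$ is $\sum_k g_k\, k$ with $g_k \in R*_{<}Q$, i.e.\ $\sum_{k,q} r_{k,q}(qk)$ such that $\{q : r_{k,q} \ne 0\}$ is well-ordered for each of the finitely many $k \in K$. Since $K$ is finite, these two support conditions are equivalent by Lemma~\ref{l:woprop} \eqref{i:union}, so the two rings share the same underlying abelian group.

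Finally, the multiplication on each side is, by construction, the bilinear extension of the convolution formula \eqref{e:conv} applied to the structure maps of $R*G$: the coefficient of any given $g \in G$ in a product is a finite sum by Lemma~\ref{l:woprop} \eqref{i:finiteness}, and this sum is determined by $\tau$ and $\mu$ on $G$ alone. Organizing the computation along $Q$-cosets (for $(R*K)*_{<}Q$) or along $K$-cosets (for $(R*_{<}Q)*K$) produces the same answer, so multiplications coincide. The main obstacle, such as it is, is carefully bookkeeping the two sets of structure maps through the decompositions; this is tame precisely because $K$ and $Q$ commute, making the reindexing purely formal.
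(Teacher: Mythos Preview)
Your proof is correct and follows essentially the same approach as the paper: both arguments identify $(R*K)*_{<}Q$ and $(R*_{<}Q)*K$ with the common ring of $R$-valued functions on $G$ whose support meets each $Q$-coset in a well-ordered set, with multiplication inherited from $R*G$. Your version is more explicit about why the $K$-action extends to $R*_{<}Q$ and why the two support conditions coincide (via Lemma~\ref{l:woprop}\eqref{i:union}), but the underlying idea is identical.
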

\begin{proof}
    By Lemma~\ref{l:decomp} $(R*K)*Q=R*G \cong (R*Q)*K$.
    The order on $Q$ induces an order on each $Q$-coset.
    By combining terms both $(R*K)*_{<}Q$ and $(R*_{<}Q)*K$ can be identified with the ring of functions on $G$ whose support intersects each coset in well-ordered set.
\end{proof}

We have the following versions of the exponential law for finite index subgroups.
\begin{lemma}\label{l:p2}
    Suppose $K \nsgp G \onto Q$ where $Q$ is ordered, $H \nsgp G$ has finite index, and $HK=G$.
    Let $K'=H\cap K$.
    Given a crossed product $R * G/K'$, for any order on $Q$,
    \[
    (R *_{<} Q)*G/H \cong (R*G/H)*_{<}Q .
    \]
\end{lemma}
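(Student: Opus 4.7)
The plan is to show that the hypotheses force $G/K'$ to split as an internal direct product of $Q$ and a finite group isomorphic to $G/H$; once this is established, the desired identity is an immediate application of Lemma~\ref{l:sum}.

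First I would verify the commutator relation $[K,H] \subset K'$. For $g \in K$ and $h \in H$, the element $[g,h] = ghg^{-1}h^{-1}$ lies in $H$ because $ghg^{-1} \in H$ by normality of $H$, and lies in $K$ because $hg^{-1}h^{-1} \in K$ by normality of $K$; hence $[g,h] \in K \cap H = K'$. Passing to $G/K'$, this means that the subgroups $K/K'$ and $H/K' = Q$ commute elementwise. They meet trivially, since $(K \cap H)/K' = 1$, and together generate $G/K'$, since $HK = G$ gives $(K/K')(H/K') = (KH)/K' = G/K'$. So $G/K' = (K/K') \times Q$ as an internal direct product. By the second isomorphism theorem, $K/K' = K/(K \cap H) \cong KH/H = G/H$, which is finite.

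Now the given crossed product $R * G/K'$ is a crossed product over the direct product $(G/H) \times Q$ with $G/H$ finite and $Q$ ordered, so it satisfies the hypotheses of Lemma~\ref{l:sum}. Invoking that lemma with its ``$K$'' played by $G/H$ and its ``$Q$'' played by $Q$ immediately yields
\[
    (R * G/H) *_{<} Q = (R *_{<} Q) * G/H,
\]
which is the desired identity. The only non-routine step is the commutator calculation, which crucially exploits that both $K$ and $H$ are normal in $G$; everything else is bookkeeping reducing the statement to Lemma~\ref{l:sum}.
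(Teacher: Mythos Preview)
Your proof is correct and follows essentially the same approach as the paper: both arguments show that $G/K'$ is the internal direct product of $K/K' \cong G/H$ and $H/K' = Q$, then invoke Lemma~\ref{l:sum}. The paper simply cites the standard fact that two normal subgroups with trivial intersection commute, whereas you spell out the commutator computation explicitly, but the structure and key idea are identical.
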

\begin{proof}
    Both $K/K' \cong HK/H \cong G/H$ and $H/K' \cong HK/K \cong Q$ are normal subgroups of $G/K'$ and they intersect trivially, so they commute, and since $HK=G$, we have $G/K' \cong K/K'\times H/K'$.
    So, Lemma~\ref{l:sum} implies $(R * K/K') *_{<} Q \cong (R *_{<} Q)* K/K'$, and substituting $K/K' \cong G/H$ proves the claim.
\end{proof}
\begin{lemma}\label{l:p3}
    Suppose $K \nsgp G \onto Q$ where $Q$ is ordered, and $H \nsgp G$ has finite index.
    Set $K'=H\cap K$ and $Q'= H/K$, and consider the induced order on $Q'$.
    Given a crossed product $R' * G/K'$, set $R=R'*K/K'$.
    Then
    \[
    (R' *_{<} Q')*G/H \cong R*_{<}Q.
    \]

    Furthermore, this isomorphism restricts to an isomorphism
    \[
    (R' * Q')*G/H \cong R*Q.
    \]
    which does not depend on the ordering on $Q$.
\end{lemma}
\begin{proof}
    We have:
    \begin{align*}
        & R*_{<} Q \cong (R *_{<} Q')*Q/Q' &\text{by Lemma \ref{l:MNdecomp}(1)} \\
        &\cong \bigl((R' *HK/H)*_{<} Q'\bigr)*G/HK &\text{since $K/K' \cong HK/H$ and $Q/Q' \cong G/HK$} \\
        &\cong\bigl((R' *_{<} Q')*HK/H \bigr)*G/HK &\text{by Lemma \ref{l:p2} with $G=HK$ and $K=K'$} \\
        &\cong (R' *_{<} Q')*G/H &\text{by Lemma \ref{l:decomp}} \\
    \end{align*}

    The second statement follows from the fact that each of the isomorphisms between Malcev--Neumann crossed products in Lemma \ref{l:MNdecomp}{(1)} and Lemma \ref{l:p2} are induced by isomorphisms between the corresponding crossed products.
\end{proof}

\section{Linnell skew fields}\label{s:linnell}
Suppose a group ring $\FG$ is contained in a skew field $D$, and $D$ is the division closure of $\FG$ in $D$ (i.e.
there is no proper skew subfield $D'$ with $\FG < D'<D$).
We shall need the fact that two homomorphisms from $D$ which agree on $\FG$ are equal.

Following \cite{jl23} we say $D$ is \emph{Linnell} if for any subgroup $K < G$, $g_1, \dots, g_n \in G$ in different $K$-cosets, and $d_1, \dots, d_n$ nonzero elements in the division closure of $\FK$ in $D$, the sum $\sum^n_{i = 1}d_i g_i$ is nonzero.

Hughes \cite{h70} showed that if $G$ is locally indicable then such a skew field, if it exists, is unique up to isomorphisms fixing $\FG$ (Hughes actually requires a weaker linear independence condition; as explained in \cite{jl23}, it follows from Gr\"{a}ter's work in \cite{g20} that for locally indicable groups they are equivalent).
We shall denote any Linnell skew field containing $\FG$ by $D_{\FG}$.
\begin{lemma}\label{l:DG}
    Suppose $G$ is locally indicable and $D_{\FG}$ exists.
    \begin{enumerate}
        \item\label{i:inva}
        (Hughes \cite{h70}*{p.~183}) The action of $\Aut(G)$ on $\FG$ extends to an action on $D_{\FG}$, in particular, $D_{\FG}$ is a $G$-invariant $\FG$-skew field.
        \item\label{i:subgroup}
        If $K$ is a subgroup of $G$, then the division closure of $\FK$ in $D_{\FG}$ is $D_{\FK}$.
        \item\label{i:nsubgroup}
        If $K$ is a normal subgroup of $G$, then $\FG < D_{\FK} *G/K < D_{\FG}$.
        \item\label{i:fi}
        Suppose $H$ is a finite index normal subgroup of $G$.
        Then:
        \[
        D_{\FG} \cong D_{\FH}*G/H.
        \]
    \end{enumerate}
\end{lemma}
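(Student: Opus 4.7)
The plan is to tackle each part in turn, reducing everything back to the Linnell property of $D_{\FG}$.

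Part \eqref{i:inva} I would cite directly from Hughes.

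For \eqref{i:subgroup}, I would set $D$ to be the division closure of $\FK$ inside $D_{\FG}$ and apply the Hughes uniqueness theorem to conclude $D = D_{\FK}$. The point to verify is that $D$ satisfies the Linnell condition over $\FK$. Given a subgroup $K' < K$, elements $k_1, \dots, k_n \in K$ in distinct $K'$-cosets, and nonzero $d_1, \dots, d_n$ in the division closure of $\FF K'$ inside $D$, I want $\sum d_i k_i \neq 0$. The division closure of $\FF K'$ inside $D$ sits inside the division closure of $\FF K'$ inside $D_{\FG}$, and the $k_i$ lie in distinct $K'$-cosets of $G$, so the Linnell condition on $D_{\FG}$ applied to $K'$ yields the desired non-vanishing.

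For \eqref{i:nsubgroup}, I would choose a set-theoretic section $s : G/K \to G$ and build the crossed product $D_{\FK} * G/K$ using structure maps $\gt(q) = $ conjugation by $s(q)$ and $\mu(q,q') = s(q) s(q') s(qq')^{-1} \in K$. The crucial fact that makes this well-defined is \eqref{i:inva}: conjugation by $s(q) \in G$ extends uniquely from $\FK$ to $D_{\FK}$, so $\gt(q) \in \Aut(D_{\FK})$. I would then define a ring map $\Phi : D_{\FK} * G/K \to D_{\FG}$ by $d\, q \mapsto d \cdot s(q)$. The main obstacle, and the only real content of this part, is showing $\Phi$ is injective, which is precisely the Linnell condition of $D_{\FG}$ with respect to the subgroup $K$, applied to the coset representatives $s(q)$. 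The inclusion $\FG \subset D_{\FK} * G/K$ is then automatic.

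For \eqref{i:fi}, part \eqref{i:nsubgroup} already provides an embedding $D_{\FH} * G/H \hookrightarrow D_{\FG}$ whose image contains $\FG$. Since $[G : H]$ is finite, $A := D_{\FH} * G/H$ is a finite-dimensional left $D_{\FH}$-module. As a subring of the skew field $D_{\FG}$, $A$ is a domain. A standard argument (right multiplication by a nonzero element of $A$ is a left-$D_{\FH}$-linear injection of the finite-dimensional module $A$ into itself, hence a bijection) shows that a finite-dimensional domain over a skew field is itself a skew field, so $A$ is a skew field. Since $A$ is a skew field containing $\FG$ and sitting inside the division closure $D_{\FG}$ of $\FG$, minimality forces $A = D_{\FG}$. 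The main obstacle throughout is cleanly setting up the reduction of each claim to an application of the Linnell condition on $D_{\FG}$; once that is in place, the remaining verifications are bookkeeping.
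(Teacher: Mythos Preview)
Your proposal is correct and follows essentially the same route as the paper: each part is reduced to the Linnell condition and Hughes uniqueness, with \eqref{i:fi} handled by the finite-dimensional-domain-over-a-skew-field argument. The only difference is cosmetic---you spell out the verification that the division closure of $\FK$ in $D_{\FG}$ is Linnell, whereas the paper simply asserts it; your extra sentence in \eqref{i:subgroup} is accurate (indeed the two division closures of $\FF K'$ coincide, not merely nest).
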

\begin{proof}
    \eqref{i:inva} Uniqueness of $D_{\FG}$ implies that every automorphism of $\FG$ extends to an automorphism of $D_{\FG}$.
    Since $D_{\FG}$ is a division closure of $\FG$ such an extension is unique, and the claim follows.

    \eqref{i:subgroup} It is easy to see that the division closure of $\FK$ is Linnell, so this follows from uniqueness.

    \eqref{i:nsubgroup} By \eqref{i:inva}, we can extend the structure maps for $\FK*G/K$ to form a crossed product $D_{\FK} *G/K$.
    Since $D_{\FG}$ is Linnell, $D_{\FK} *G/K$ injects into $D_{\FG}$.

    \eqref{i:fi} By \eqref{i:nsubgroup} $\FG < D_{\FH} *G/H < D_{\FG}$.
    Thus $D_{\FH} *G/H$ is a finite dimensional algebra over $D_{\FH}$ with no zero divisors, so it is a skew field, and hence isomorphic to $D_{\FG}$.
\end{proof}

Note that Lemma \ref{l:op} implies that orderable groups are locally indicable.
If $G$ is orderable, then the division closure of $\FG$ in the Malcev-Neumann skew field $\F\ast_<G$ is the Linnell skew field $D_{\FG}$.

If $Q$ is an amenable group, $D$ is a skew field, and $D*Q$ has no zero divisors, then $D*Q$ satisfies the Ore condition with respect to all non-zero elements \cite{t54}, and the associated Ore localization $\Ore(D*Q)$ is a skew field.
Elements of $\Ore(D*Q)$ can be thought of as fractions $f/g$ with $f,g \in D*Q$, the Ore condition ensures the existence of common denominators, and therefore the usual rules for equality and arithmetic operations apply.
$\Ore(D*Q)$ is clearly the division closure of $D*Q$, moreover, any embedding of $D*Q$ into a skew field $S$ induces an isomorphism from $\Ore(D*Q)$ to the division closure of $D*Q$ in $S$.
If $D*Q \cong \F Q$, then $\Ore(D*Q)$ is a Linnell skew field of $\F Q$.
\begin{lemma}\label{l:Ore}
    Suppose $K \nsgp G \onto Q$ with $G$ locally indicable and $Q$ orderable and amenable.
    If $D_{\FG}$ exists then:
    \begin{enumerate}
        \item\label{i:ore}
        $D_{\FG} \cong \Ore(D_{\FK}*Q)$.
        \item\label{i:div}
        If $D_{\FK}*Q$ is contained in a skew field $S$, then $D_{\FG}$ is the division closure of $\FG$ in $S$.
        \item\label{i:MNDG}
        For any order on $Q$, $D_{\FK}*_{<}Q$ is a skew field and $D_{\FG}$ is the division closure of $\FG$ in $D_{\FK}*_{<}Q$.
    \end{enumerate}
\end{lemma}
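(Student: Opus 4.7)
The plan is to derive (1) first, deduce (2) from it by a minimality argument, and then apply (2) to a concrete choice of $S$ to obtain (3).

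For (1), my first step is to check that the crossed product $D_{\FK}*Q$ is well-defined: since $K \nsgp G$, conjugation gives a $G$-action on $\FK$, which extends to $D_{\FK}$ by Lemma~\ref{l:DG}\eqref{i:inva}, so $D_{\FK}$ is $G$-invariant and Lemma~\ref{l:MN}\eqref{i:emb} produces $D_{\FK}*Q$. Fixing any order on the orderable group $Q$, Lemma~\ref{l:MN}\eqref{i:field} shows that the Malcev--Neumann crossed product $D_{\FK}*_{<}Q$ is a skew field, so its subring $D_{\FK}*Q$ is a domain. Since $Q$ is amenable, a classical F{\o}lner-set argument (the crossed-product generalization of Tamari's theorem to Ore domain coefficients) yields the Ore condition, so $\Ore(D_{\FK}*Q)$ exists as a skew field.

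By Lemma~\ref{l:DG}\eqref{i:nsubgroup} we have $\FG \subset D_{\FK}*Q \subset D_{\FG}$. The universal property of Ore localization extends this inclusion to an embedding $\Ore(D_{\FK}*Q) \hookrightarrow D_{\FG}$ whose image is a skew subfield of $D_{\FG}$ containing $\FG$. By the defining minimality of $D_{\FG}$ as the division closure of $\FG$, no proper skew subfield of $D_{\FG}$ can contain $\FG$, and we conclude $\Ore(D_{\FK}*Q) = D_{\FG}$.

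For (2), suppose $D_{\FK}*Q \subset S$ for some skew field $S$. The same universal property embeds $D_{\FG}=\Ore(D_{\FK}*Q)$ as a skew subfield of $S$ containing $\FG$. The division closure $D'$ of $\FG$ in $S$ is by definition the smallest skew subfield of $S$ containing $\FG$, so $D' \subset D_{\FG}$; but then $D'$ is itself a skew subfield of $D_{\FG}$ containing $\FG$, and the minimality in the definition of $D_{\FG}$ forces $D'=D_{\FG}$. For (3), Lemma~\ref{l:MN}\eqref{i:field} already supplies that $D_{\FK}*_{<}Q$ is a skew field (using again the $G$-invariance of $D_{\FK}$), and it clearly contains $D_{\FK}*Q$; so applying (2) with $S=D_{\FK}*_{<}Q$ completes the proof.

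The main obstacle is establishing the Ore condition for $D_{\FK}*Q$. This is well known for crossed products of Ore domains (here a skew field) by amenable groups, but one must either carry out the F{\o}lner-dimension argument directly or cite the appropriate generalization of Tamari's theorem to crossed products (as in work of Bartholdi, Linnell, or the appendix of Jaikin-Zapirain).
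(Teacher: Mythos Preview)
Your argument is correct and follows the same route as the paper: embed $D_{\FK}*Q$ into $D_{\FG}$ via Lemma~\ref{l:DG}\eqref{i:nsubgroup}, extend to the Ore localization, and conclude by minimality of $D_{\FG}$; parts \eqref{i:div} and \eqref{i:MNDG} then follow exactly as you wrote. The only difference is one of emphasis: you explicitly flag the existence of $\Ore(D_{\FK}*Q)$ as requiring the amenability of $Q$ via a Tamari-type argument, whereas the paper simply writes ``this extends to a homomorphism $\Ore(D_{\FK}*Q)\to D_{\FG}$'' and leaves the Ore condition implicit in the hypothesis that $Q$ is amenable.
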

\begin{proof}
    For \eqref{i:ore}, by Lemma~\ref{l:DG}\eqref{i:nsubgroup}, $D_{\FK}*Q$ injects into $\DG$; and since $\DG$ is a skew field this extends to an injective homomorphism $\Ore(D_{\FK}*Q)\to D_{\FG}$.
    Since the image of $\Ore(D_{\FK}*Q) \to D_{\FG}$ contains $\FG$, it is surjective.
    \eqref{i:div} follows immediately from \eqref{i:ore}.
    Finally, $D_{\FK}*_{<}Q$ is a skew field by Lemma~\ref{l:MN}\eqref{i:field}, which contains $D_{\FK}*Q$, so \eqref{i:MNDG} follows from \eqref{i:div}.
\end{proof}
So, under the hypothesis of the above lemma, $D_{\FK}*_{<}Q$ contains two subrings, $\FK*_{<} Q$ and $D_{\FG}$.
In particular, if $\psi: G \onto \zz$ is a character and $J= \ker \psi$, then $D_{\FJ} *_{<} \zz$ contains $D_{\FG}$ and the Novikov ring $\F G^\psi=\FJ *_{<} \zz$.
In other words, $D_{\FG}$ contains two subrings, $(\FK*_{<} Q ) \cap D_{\FG} $ and $\F G^\psi \cap D_{\FG}$.
The next lemma shows that an element of the former lies in the latter for a suitably chosen character.
\begin{lemma}\label{l:nov}
    Suppose $K \nsgp G \onto Q$ with $G$ locally indicable and $Q$ orderable amenable, and $D_{\FG}$ exists.
    Fix any order $<$ on $Q$.
    Suppose $f \in (\FK*_{<} Q ) \cap D_{\FG} $ and write $f=x/y$ with $x,y \in D_{\FK}*Q$.
    Suppose $\gf:Q \to \zz$ is a character which strictly preserves the order on the support of $y$, and let $\psi:G \to \zz$ be the induced character.
    Then $f \in \F G^\psi$.
\end{lemma}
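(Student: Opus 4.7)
My plan is to embed $D_{\FH}$ into $D_{\FK}*_{<}Q$ via Lemma~\ref{l:Ore}\eqref{i:MNDG} and compute $y^{-1}$ as an explicit geometric series using Lemma~\ref{l:MN}\eqref{i:lead}. The hypothesis on $\varphi$ will then force the $Q$-support of $y^{-1}$, and hence of $f=xy^{-1}$, to be bounded below in the $\varphi$-direction in a controlled way, which is exactly the Novikov condition on $H$ once we account for the fact that $\psi$ factors through $Q$.

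In detail, I set $q_1 = \min \supp_Q(y)$ and factor $y = y_{q_1} q_1(1 - P)$, where $P$ is a finite-support element of $D_{\FK}*Q$ whose $Q$-support sits inside $\{q_1^{-1} q : q \in \supp_Q(y) \setminus \{q_1\}\}$. Because $\varphi$ is $\zz$-valued and strictly preserves the order on $\supp(y)$, every $q' \in \supp_Q(P)$ satisfies $\varphi(q') \ge 1$. Lemma~\ref{l:MN}\eqref{i:lead} then gives
\[
    y^{-1} = \Bigl(\sum_{m \ge 0} P^m\Bigr)(y_{q_1} q_1)^{-1}
\]
in $D_{\FK}*_{<}Q$. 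Since $\varphi$ is a homomorphism and the $Q$-coordinate of a product in the crossed product $D_{\FK}*Q$ is the product of $Q$-coordinates, we have $\supp_Q(P^m) \subseteq \varphi^{-1}[m,\infty)$; so for each $t \in \zz$ only the finitely many summands with $m \le t + \varphi(q_1)$ can contribute to $\{q \in \supp_Q(y^{-1}) : \varphi(q) \le t\}$. Each $P^m$ has finite $Q$-support, so this intersection is finite. Multiplication by $x$, which itself has finite $Q$-support, preserves this property, hence $\{q \in \supp_Q(f) : \varphi(q) \le t\}$ is finite for every $t$.

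To finish, I use that $f \in \FK *_{<} Q$ to conclude that its $Q$-coefficients lie in $\FK$ and so have finite $K$-support. Since $\psi$ factors as $\psi = \varphi \circ (H \twoheadrightarrow Q)$, the $H$-support of $f$ intersects $\psi^{-1}(-\infty, t]$ in a finite union (indexed by the finite set $\{q \in \supp_Q(f) : \varphi(q) \le t\}$) of finite sets, so $f \in \FH^\psi$. The only real subtlety is the bookkeeping around the crossed-product twist, namely noting that the $Q$-coordinate of a product is genuinely multiplicative and independent of the action on $D_{\FK}$; once this is in place, the argument is essentially the standard geometric-series computation for inverting a finite-support power series with invertible leading term.
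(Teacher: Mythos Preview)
Your proof is correct and follows essentially the same approach as the paper's: normalize $y$, expand $y^{-1}$ as a geometric series, use the hypothesis on $\varphi$ to bound the $\varphi$-values on $\supp_Q(P^m)$ from below, and combine this with the hypothesis $f\in\FK*_{<}Q$ to obtain the Novikov support condition. The only cosmetic difference is that the paper introduces $J=\ker\psi$, identifies $\FH^\psi=\FJ*_{<}\zz$, and notes that the same geometric series represents $f$ in $D_{\FJ}*_{<}\zz$, whereas you verify the condition on the $H$-support directly from the $Q$-picture.
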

\begin{proof}
    Let $J=\ker \psi$.
    By Lemma~\ref{l:DG}\eqref{i:nsubgroup} $D_{\FK}*J/K < D_{\FJ}$, so by Lemma~\ref{l:decomp} $D_{\FK}*Q \cong (D_{\FK}*J/K)* \zz < D_{\FJ} * \zz$.
    Since $x,y \in D_{\FK}*Q$, the same fraction $x/y$ represents $f$ in $D_{\FG}=\Ore(D_{\FJ}*\zz)$.

    We can assume that $y=1-P$ where $P \in D_{\FK}*Q$ has positive support in $Q$.
    Then $P$ considered as an element of $D_{\FJ} * \zz$ has finite and positive support in $\zz$, because $\gf$ preserves the order on the support of $y$.
    Since $f$ is represented by the series $x(1+P+P^2+\dots)$ in $\FK*_{<} Q $, the same series represents $f$ in $D_{\FJ} *_{<} \zz$.

    We want the series to be in $\FJ *_{<} \zz = \FG^\psi$, so we need the support of this series to have finite intersection with $\gf^{-1}(z)$ for each $z\in \zz$ and to be contained in $\gf^{-1}([t,\infty))$ for some~$t$.
    Let $p_{\min} $ and $p_{\max}$ be the minimal and maximal elements of the support of $P$.
    Then the support of $P^{n}$ is between $p_{\min}^{n} $ and $p_{\max}^{n}$, hence its image under $\gf$ is between $n \gf(p_{\min}) $ and $n \gf(p_{\max})$, and since $0< \gf(p_{\min}) \leq \gf(p_{\max})$ and the supports of $P^{n}$ and $x$ are finite, the claim follows.
\end{proof}

\section{Invariant Malcev--Neumann rings} We now consider the following setup.
Suppose we have an ambient group $\cG$ which is locally indicable and such that $D_{\F\cG}$ exists, and suppose we have two normal subgroups $K < G$ of $\cG$ with $Q = G/K$ orderable and amenable.
Let $R$ be a subring of $D_{\FK}$ which contains $\F K$ and is invariant under the $\cG$-action on $D_{\FK}$ induced by conjugation on $\FK$.
So, for each order on $Q$, we have two subrings of $D_{\FK}*_{<}Q$: $R*_{<} Q$ (since $R<D_{\FK}$) and $D_{\FG}$ (by Lemma~\ref{l:Ore}\eqref{i:MNDG}).
Note that Malcev--Neumann rings $D_{\FK}*_{<}Q$ and $R*_{<} Q$ are generally not $\cG$-invariant; the conjugation action of $\cG$ on $Q$ does not respect the order, and destroys the well-ordering property of supports of series.

The intersection $(R*_{<} Q) \cap D_{\FG}$ is obviously a subring of $\DG$ and we intersect all such subrings over all orders on $Q$.
We denote the resulting subring by $R**Q$, and call it the \emph{invariant} Malcev--Neumann ring.
It has the same properties as $R$: it contains $\FG$ and is invariant under the $\cG$-action on $D_{\FG}$.

We can think of $R**Q$ as consisting of elements of $D_{\FG}$ which, for each order on $Q$ are represented by a Malcev--Neumann series with $R$ coefficients.

Given a non-empty subset $S$ of $R**Q-0$, denote by $\ell(S)$ the collection of all leading coefficients which appear in these series for elements of $S$ for all orders.
\begin{lemma}\label{l:inv}
    Let $f\in R**Q$.
    Then:
    \begin{enumerate}
        \item\label{i:fin}
        $\ell(f)$ is finite.
        \item\label{i:inv}
        $f$ is invertible in $R**Q$ if and only if every $r \in \ell(f)$ is a unit in $R$.
    \end{enumerate}
\end{lemma}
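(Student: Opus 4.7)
The plan is to handle (1) and (2) separately, invoking the Ore description $D_{\FG} = \Ore(D_{\FK}*Q)$ of Lemma~\ref{l:Ore}\eqref{i:ore} throughout. For (1), I would write $f = y^{-1} x$ with $x, y \in D_{\FK}*Q$, both having \emph{finite} support in $Q$. Fix any order $<$ on $Q$ and let $p^* = \min_< \supp(x)$, $q^* = \min_< \supp(y)$. Since $D_{\FK}$ is a skew field, the leading term $y_{q^*} q^*$ of $y$ is a unit in $D_{\FK}*Q$; factoring $y = (y_{q^*} q^*)(1+u)$ with $u$ of positive support and inverting via the geometric series shows that the leading term of $y^{-1}$ in $D_{\FK}*_<Q$ is the single term $(y_{q^*} q^*)^{-1}$. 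Hence the leading term of $f$ is $(y_{q^*} q^*)^{-1}(x_{p^*} p^*)$, a quantity depending only on the pair $(p^*, q^*) \in \supp(x) \times \supp(y)$. Since these supports are finite, $|\ell(f)| \le |\supp(x)| \cdot |\supp(y)|$.

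For the forward direction of (2), assume $fg = 1$ for some $g \in R**Q$. Fix an order $<$ and write the leading terms of $f$ and $g$ as $rq$ and $sp$ respectively (with $r,s \in R$, $p,q \in Q$). Since leading terms multiply in a Malcev--Neumann crossed product, $(rq)(sp)$ must equal the leading term of $1$. Expanding via the convolution formula \eqref{e:conv} yields $qp = 1_Q$ together with an equation $r \cdot \tau(q)(s) \cdot \mu(q,p) = \mu(1,1)^{-1}$. The crossed product $\FG = \FK*Q$ comes from a set-theoretic section of $G \to Q$, so $\tau$ is conjugation by an element of $\cG$ and $\mu$ takes values in $K$; hence $\tau(q)(s) \in R$ by $\cG$-invariance, and $\mu(q,p), \mu(1,1)^{-1} \in K \subset R^\times$. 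Rearranging exhibits a right inverse for $r$ inside $R$, and combined with $r$ being a unit in $D_{\FK}$ this forces $r \in R^\times$. This applies to every order.

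For the reverse direction of (2), suppose every element of $\ell(f)$ is a unit in $R$. Then for each order $<$, the leading coefficient of $f$ viewed in $R*_<Q$ is a unit in $R$, so by Lemma~\ref{l:MN}\eqref{i:lead} there is an inverse $g_< \in R*_<Q$. But $f$ is also invertible in $D_{\FG}$, and by uniqueness of inverses in the ambient skew field $D_{\FK}*_<Q$ (Lemma~\ref{l:MN}\eqref{i:field}), the two inverses coincide. Thus $f^{-1} \in R*_<Q \cap D_{\FG}$ for every order, so $f^{-1} \in R**Q$.

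The main obstacle I anticipate is the bookkeeping in the forward direction of (2): verifying that a unit forced by the leading-term equation, which a priori lives only in $D_{\FK}$, is actually a unit of $R$. Everything hinges on the specific features of the crossed product coming from the section $Q \to G$, as recorded after Lemma~\ref{l:decomp}: namely, $\tau$ is conjugation (hence preserves any $\cG$-invariant subring) and $\mu$ is $K$-valued.
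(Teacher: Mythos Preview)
Your proposal is correct and follows essentially the same approach as the paper. For part~(1) you reproduce the paper's argument (Ore fraction, finitely many possible leading terms), with the minor cosmetic difference of a left fraction and an explicit bound $|\supp(x)|\cdot|\supp(y)|$. For part~(2), the paper dispatches both directions in one line by citing Lemma~\ref{l:MN}\eqref{i:lead}; your reverse direction does exactly this, and your forward direction is correct but unnecessarily hands-on: once $fg=1$ with $g\in R**Q \subset R*_<Q$, the element $f$ is invertible \emph{in} $R*_<Q$, so Lemma~\ref{l:MN}\eqref{i:lead} already gives that the leading coefficient is a unit in $R$ --- there is no need to unwind the convolution formula or track $\tau$ and $\mu$, and your anticipated ``main obstacle'' dissolves.
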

\begin{proof}
    \eqref{i:fin} By Lemma~\ref{l:Ore}\eqref{i:ore} $R**Q < \Ore(D_{\FK}*Q) = D_{\FG}$, so we can write $f=xy^{-1}$ with $x, y \in D_{\FK}*Q$.
    For each order $<$ on $Q$ we can write $y^{-1}$ as a Malcev--Neumann series in $D_{\FK}*_{<}Q$.
    Since the support of $y$ is finite, we only obtain finitely many series, depending only on the leading term of $y$ with respect to the order.
    Left-multiplying such a series by $x$ gives a series for $f$ in $D_{\FK}*_{<}Q$, and since $f \in R**Q$ this series has $R$-coefficients.
    So the set of the leading coefficients of these series is $\ell(f)$.

    \eqref{i:inv} Follows from Lemma~\ref{l:MN}\eqref{i:lead} and the fact that $R < D_{\F K}$ implies that $R$ has no zero divisors.
\end{proof}

For torsion-free abelian quotients we have an analogue of Lemma \ref{l:p3} for invariant rings.
\begin{lemma}\label{l:cp3}
    Suppose $K \nsgp G \onto Q$ with $Q$ torsion-free abelian, and $H \nsgp G$ has finite index.
    Let $K'=H\cap K$, $Q'= H/K' < Q$.
    Given a $\cG$-invariant ring $\FK' < R' < D_{\FK'}$, set $R = R'*K/K'$.
    Then:
    \[
    (R' ** Q')*G/H \cong R**Q.
    \]
\end{lemma}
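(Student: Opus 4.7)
The plan is to reduce to Lemma~\ref{l:p3} for each fixed order and then take the appropriate intersection over all orders. Since $Q$ is torsion-free abelian, Lemma~\ref{l:fio} gives a bijection between orders on $Q$ and orders on $Q'$ by restriction, so the intersections defining $R**Q$ and $R'**Q'$ can be parameterized by the same index set.

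First, I would fix an order $<$ on $Q$ restricting to $<'$ on $Q'$. Applying Lemma~\ref{l:p3} with the choice $R' = D_{\FK'}$ (so that $R = D_{\FK'}*K/K' = D_{\FK}$ by Lemma~\ref{l:DG}\eqref{i:fi}) identifies the skew field $(D_{\FK'}*_{<'} Q')*G/H$ with $D_{\FK}*_{<}Q$. Inside this ring, $D_{\FG} = D_{\FH}*G/H$ sits as a subring, combining Lemma~\ref{l:DG}\eqref{i:fi} with the embedding $D_{\FH} \subset D_{\FK'}*_{<'}Q'$ from Lemma~\ref{l:Ore}\eqref{i:MNDG}. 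Applying Lemma~\ref{l:p3} once more, now to the given $R'$, yields
\[
(R'*_{<'}Q')*G/H = R*_{<}Q
\]
as subrings of $D_{\FK}*_{<}Q$.

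The key step is to interchange the intersection over orders with the finite crossed product $-*G/H$. Choose a set-theoretic section of $G \to G/H$. Every $f \in D_{\FG}$ has a unique decomposition $f = \sum_{g \in G/H} d_g s_g$ with $d_g \in D_{\FH}$; for each order $<'$ on $Q'$, the canonical uniqueness of crossed-product coefficients implies that $f$ lies in $(R'*_{<'}Q')*G/H$ if and only if each $d_g$ (viewed inside $D_{\FK'}*_{<'}Q'$ via Lemma~\ref{l:Ore}\eqref{i:MNDG}) lies in $R'*_{<'}Q'$. Intersecting over all orders, $f \in R**Q$ if and only if each $d_g \in R'**Q'$, which is precisely the condition $f \in (R'**Q')*G/H$.

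The main obstacle is a bookkeeping one: verifying that the various embeddings $D_{\FH} \hookrightarrow D_{\FK'}*_{<'}Q'$ (one for each order $<'$) are compatible with the embedding $D_{\FG} \hookrightarrow D_{\FK}*_{<}Q$ and with the decomposition $D_{\FG} = D_{\FH}*G/H$, so that the canonical $D_{\FH}$-coefficients of $f$ really do coincide with the coefficients of $f$ in the crossed product $(D_{\FK'}*_{<'}Q')*G/H$. This follows from uniqueness of the Ore localization embedding (Lemma~\ref{l:Ore}) and from the identification in Lemma~\ref{l:DG}\eqref{i:fi}.
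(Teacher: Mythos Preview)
Your proposal is correct and follows essentially the same route as the paper: apply Lemma~\ref{l:p3} order-by-order, use Lemma~\ref{l:fio} to identify the index sets of orders on $Q$ and $Q'$, and then commute the finite crossed product $-*G/H$ past the intersection over orders via uniqueness of crossed-product coefficients. The paper compresses your coefficient argument into the single displayed identity $((R'*_{<}Q')\cap D_{\FH})*G/H = (R*_{<}Q)\cap D_{\FG}$, justified by noting that $R$ and $D_{\FK}$ share structure maps (whence $R\cap D_{\FK'}=R'$), whereas you spell out the same step by decomposing $f=\sum d_g s_g$ and tracking when each $d_g$ lies in $R'*_{<'}Q'$; the content is the same.
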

\begin{proof}
    We first prove that for a fixed order on $Q$ and the induced order on $Q'$
    \[
    ((R' *_{<} Q') \cap D_{\FH})*G/H \cong (R *_{<} Q) \cap D_{\FG},
    \]

    Here, the intersection on the left hand side is taken in $D_{\F K'} *_{<} Q'$, and by Lemma~\ref{l:Ore}\eqref{i:MNDG}, $D_{\F H}$ is the division closure of $\im(i')$, where $i': D_{\F K'} *Q' \to D_{\F K'} *_{<} Q'$ is the natural embedding.
    Similarly the intersection on the right hand side is taken in $D_{\F K} *_{<} Q$, and $D_{\F G}$ is the division closure of $\im(i)$, where $i: D_{\F K} *Q \to D_{\F K} *_{<} Q$.

    Since the constructions in Section~\ref{s:cross} are functorial with respect to the coefficient ring, we have the following commutative diagram:
    \begin{center}
        \begin{tikzcd}
            (R' *_{<} Q')*G/H \arrow[d, hookrightarrow ] \arrow[r,"\cong \ref{l:p3} "] &(R' * K/K')*_{<} Q \arrow[r, " ="] \arrow[d, hookrightarrow] & R *_{<} Q \arrow[d, hookrightarrow]\\
            (D_{\F K'} *_{<} Q')*G/H \arrow[r,"\cong \ref{l:p3}"] &(D_{\F K'}*K/K')*_{<} Q \arrow[r,"\cong \ref{l:DG}\eqref{i:fi}"] & D_{\F K} *_{<} Q\\
            (D_{\F K'} *Q') *G/H \arrow[u, "i' * id", hookrightarrow] \arrow[r,"\cong \ref{l:p3}"] &(D_{\F K'}*K/K')*Q \arrow[u, hookrightarrow] \arrow[r,"\cong \ref{l:DG}\eqref{i:fi}"] & D_{\F K}*Q \arrow[u, "i", hookrightarrow]
        \end{tikzcd}
    \end{center}
    The composition of the maps in the middle row maps the division closure of $\im( i' * id) $ isomorphically onto the division closure of $\im( i) $, thus we have
    \[
    ((R' *_{<} Q')*G/H) \cap \Div(\im( i' * id)) \cong (R *_{<} Q) \cap D_{\F G}.
    \]

    Since the division closure of $\im i'$ is $D_{\F H}$, $i' * id$ factors through $D_{\F H}*G/H$, and the division closure of $\im( i' * id) $ contains $D_{\F H}*G/H$.
    Since $D_{\F H}*G/H$ is a skew field by Lemma~\ref{l:DG}\eqref{i:fi} the division closure of $\im( i' * id)$ is $D_{\F H}*G/H$.
    Therefore, we have the desired formula:
    \[
    ((R' *_{<} Q') \cap D_{\FH})*G/H = ((R' *_{<} Q' )*G/H) \cap (D_{\FH} * G/H) \cong (R *_{<} Q) \cap D_{\FG}.
    \]

    Next, we note that the bottom isomorphism $(D_{\F K'} *Q') *G/H \rightarrow D_{\FK} *Q$ is independent of the order; and hence so is the isomorphism between division closures $D_{\F H} * G/H \rightarrow D_{\FG}$.
    Since by Lemma~\ref{l:fio} orders on $Q$ and $Q'$ are in one-to-one correspondence, we get
    \[
    (R' ** Q') * G/H \cong R ** Q.
    \]
\end{proof}

\section{Inductive rings} Suppose that $ \dots \nsgp K_{i} \nsgp K_{i-1} \nsgp \dots \nsgp K_{0} =G$ is a residual chain of normal subgroups, such that each $Q_{i}=K_{i}/K_{i+1}$ is orderable and amenable, and there exists a Linnell skew field $\DG$ over $\FG$.
Since $\FK_{i}$ is $G$-invariant, we can apply the invariant ring construction repeatedly starting from the ring $\FK_{i}$ for any $i$.
Define inductively $R_{i}^{i}=\FK_{i}$ and for $0\leq j<i$, $R_{i}^{j}=R_{i}^{j+1}**Q_{j}< D_{\FK_{j}}$.
Since
\[
\FK_{i}= \FK_{i+1}*Q_{i} < \FK_{i+1}**Q_{i} ,
\]
we have $R_{i}^{i}< R_{i+1}^{i}$, and it follows by induction that $R_{i}^{j}< R_{i+1}^{j}$ for all $0\leq j\leq i$.

In particular, $\FG=R_{0}^{0}< R_{1}^{0}<\dots < \DG$ is an increasing sequence of subrings of $\DG$.
Let $U= \bigcup_{i=0}^{\infty} R_{i}^{0}$ denote their union.
\begin{Theorem}\label{t:union}
    \[
    U=\DG.
    \]
\end{Theorem}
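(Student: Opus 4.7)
The plan is to show that $U$ is a skew subfield of $D_{\FG}$. Since $\FG \subset U \subset D_{\FG}$ and $D_{\FG}$ is the division closure of $\FG$ in any skew field containing it, this forces $D_{\FG} \subset U$, and the reverse inclusion is immediate. That $U$ is a subring is clear, as it is the increasing union of subrings of $D_{\FG}$. So the whole task is closure under inversion: for every nonzero $f \in R_i^0$, exhibit $i' \ge i$ with $f^{-1} \in R_{i'}^0$.

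The strategy is a controlled descent through the levels. Starting from $f \in R_i^0 = R_i^1 \dast Q_0$, Lemma~\ref{l:inv}\eqref{i:fin} yields a finite set $\ell(f) \subset R_i^1$. Each $c \in \ell(f)$ lies in $R_i^1 = R_i^2 \dast Q_1$, so Lemma~\ref{l:inv}\eqref{i:fin} gives a finite $\ell(c) \subset R_i^2$. Iterating through the quotients $Q_0, Q_1, \dots, Q_{i-1}$, which terminates because $R_i^i = \FK_i$, I accumulate a finite set $E$ of nonzero elements of $\FK_i$. By Lemma~\ref{l:inv}\eqref{i:inv} applied at every level from $i-1$ down to $0$, it suffices to invert every $e \in E$ inside a single $R_{i'}^i$.

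The key ingredient I would prove is a base case lemma: if $c \in \FK_j$ has support $S$ and $N \ge j$ satisfies $ss'^{-1} \notin K_N$ for all distinct $s, s' \in S$, then $c$ is a unit in $R_N^j$. The proof is by induction on $N - j$: when $N = j$ the hypothesis forces $|S| = 1$, so $c$ is a scaled group element, trivially a unit in $\FK_j$. For the step, decompose $c$ in $\FK_j = \FK_{j+1} \ast Q_j$ as $\sum_q c_q \tilde q$; each $c_q \in \FK_{j+1}$ has support $\{s \tilde q^{-1}\}$ still separated modulo $K_N$ (since the pairwise ratios are unchanged), so by induction each $c_q$ is a unit in $R_N^{j+1}$. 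Any leading coefficient of $c$ over any order on $Q_j$ is one of the $c_q$, so Lemma~\ref{l:inv}\eqref{i:inv} gives $c$ invertible in $R_N^{j+1} \dast Q_j = R_N^j$.

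To close the argument, I would use residuality: the union of supports of the finite set $E \subset \FK_i$ is a finite subset of $K_i$, and because $\bigcap K_n = 1$, there exists $i' \ge i$ with $ss'^{-1} \notin K_{i'}$ for every pair of distinct elements in that union. The base case lemma then shows every $e \in E$ is a unit in $R_{i'}^i$. Walking back up the tower via repeated applications of Lemma~\ref{l:inv}\eqref{i:inv}, the intermediate leading coefficients at each level are units in $R_{i'}^{j+1}$, so the corresponding elements at level $j$ are units in $R_{i'}^j$, and ultimately $f$ is a unit in $R_{i'}^0$. The main obstacle is the base case: a naive level-by-level induction has no natural starting point and loops indefinitely, and the role of residuality is precisely to break this circularity by providing, for any finite data, a deep enough subgroup $K_{i'}$ that separates supports and reduces to the trivial $|S|=1$ situation.
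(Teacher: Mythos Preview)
Your proof is correct and follows essentially the same approach as the paper: compute iterated leading coefficients $\ell^i(f) \subset \FF K_i$, use residuality to find a deeper level $n$ separating their supports, and conclude invertibility by repeatedly applying Lemma~\ref{l:inv}\eqref{i:inv}. Your base case lemma simply makes explicit what the paper compresses into the single sentence ``this implies that the elements of $\ell^n(f) \subset \FF K_n$ are supported on single group elements''; the substance is identical.
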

\begin{proof}
    Since $\FG \subset U$, it is enough to show that any nonzero element of $U$ is invertible.
    Let $f \in U-0$ be arbitrary, so $f \in R_{i}^{0}$ for some $i$.
    We can repeatedly apply the leading coefficients operator $\ell$: $\ell(f) \subset R_{i}^{1}$, $\ell^{2}(f) \subset R_{i}^{2}$, etc., until we reach $R_{i}^{i}$, by Lemma~\ref{l:inv}(1) $\ell^{i}(f)$ is a finite subset of the group ring $\FK_{i}$.

    Since the subgroups $K_i$ are residual in $G$, there exists $n \geq i $ such that the group elements in the support of the elements of $\ell^{i}(f)$ are in different $K_n$-cosets.
    Thinking of $f$ as an element of $R_{n}^{0}$, this implies that the elements of $\ell^{n}(f) \subset \FK_{n}$ are supported on single group elements.
    Hence they are units in $\FK_n$, so $f$ is a unit in $U$ by inductive application of Lemma~\ref{l:inv}(2).
\end{proof}

\subsection{Finite index subgroups}
\begin{Theorem}\label{t:finiteindex}
    Let $ \dots \nsgp K_{i} \nsgp K_{i-1} \nsgp \dots \nsgp K_{0} =G$ be a residual chain of normal subgroups, such that $Q_{i}=K_{i}/K_{i+1}$ is torsion-free abelian and there exists a Linnell $\DG$ over $\FG$.
    Let $H \nsgp G$ be finite index and $ \dots \nsgp K_{i}' \nsgp K_{i-1}' \nsgp \dots \nsgp K_{0}' =H$ the corresponding chain where $K_i' = K_i \cap H$.
    For any $i, j$ with $0 \leq j \leq i$ let $R_{i}^{j}$ and $R_{i}^{\prime j}$ be the inductive rings for $G$ and $H$ respectively.
    Then we have
    \[
    R_{i}^{j} \cong R_{i}^{\prime j} \ast K_j/K_j'.
    \]
    In particular, $R_{i}^{0} \cong R_{i}^{\prime 0} \ast G/H$.
\end{Theorem}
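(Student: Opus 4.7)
The plan is to prove the more general identity $R_i^j = R_i^{\prime j} * K_j/K_j'$ by downward induction on $j$, from $j = i$ down to $j = 0$; the last statement of the theorem is then the case $j = 0$. The base case $j = i$ asks for $\FK_i = \FK_i' * K_i/K_i'$. Since $H$ and $K_i$ are both normal in $G$, the intersection $K_i' = K_i \cap H$ is normal and of finite index in $K_i$ (with $K_i/K_i' \cong K_i H/H \leq G/H$), so this is just the standard crossed product decomposition of a group ring along a normal subgroup from Lemma~\ref{l:decomp}.

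For the inductive step, assume $R_i^{j+1} = R_i^{\prime j+1} * K_{j+1}/K_{j+1}'$ and apply Lemma~\ref{l:cp3} to the extension $K_{j+1} \nsgp K_j \onto Q_j$ with finite-index normal subgroup $K_j' \nsgp K_j$. In the notation of that lemma, its $K'$ becomes $K_j' \cap K_{j+1} = K_{j+1}'$ and its $Q'$ becomes the finite-index subgroup $Q_j' = K_j'/K_{j+1}'$ of $Q_j$. Taking the input ring to be $R' = R_i^{\prime j+1}$, the lemma's output ring $R = R' * K_{j+1}/K_{j+1}'$ equals $R_i^{j+1}$ by the inductive hypothesis, and Lemma~\ref{l:cp3} yields
\[
(R_i^{\prime j+1} ** Q_j') * K_j/K_j' = (R_i^{\prime j+1} * K_{j+1}/K_{j+1}') ** Q_j.
\]
Unwinding definitions, the left side is $R_i^{\prime j} * K_j/K_j'$ and the right side is $R_i^{j+1} ** Q_j = R_i^j$, completing the induction.

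The main point requiring care is verifying Lemma~\ref{l:cp3}'s hypothesis that $R_i^{\prime j+1}$ is $\cG$-invariant with ambient group $\cG = G$, not merely with $\cG = H$. Each $K_i' = K_i \cap H$ is an intersection of subgroups normal in $G$, hence normal in $G$, so $G$ acts on $\FK_i'$ by conjugation, and by Lemma~\ref{l:DG}(\ref{i:inva}) this extends to $D_{\FK_i'}$. A parallel downward induction then shows that each $R_i^{\prime j}$ is $G$-invariant: the base case $R_i^{\prime i} = \FK_i'$ is visibly so, and the inductive step uses the fact noted in Section~4 that the $** Q$ construction sends a $\cG$-invariant input to a $\cG$-invariant output. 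With this established, Lemma~\ref{l:cp3} applies directly and the main induction proceeds cleanly.
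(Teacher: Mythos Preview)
Your proof is correct and follows the same route as the paper: backwards induction on $j$, with the base case $\FK_i = \FK_i' * K_i/K_i'$ coming from Lemma~\ref{l:decomp} and the inductive step a direct application of Lemma~\ref{l:cp3} with $G = K_j$, $H = K_j'$. Your additional paragraph verifying that each $R_i^{\prime j+1}$ is invariant under the full ambient group $G$ (rather than merely $H$) is a point the paper's proof leaves implicit; it is the right thing to check and your justification via normality of $K_i'$ in $G$ and the $\cG$-invariance of the $**$ construction is sound.
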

\begin{proof}
    We backwards induct on $j$.
    For $j = i$ this is saying $\FK_i \cong \FK_i' \ast K_i/K_i'$, which is true by Lemma \ref{l:decomp}.
    Suppose now that $R_{i}^{j+1} \cong R_{i}^{\prime j+1} \ast K_{j+1}/K_{j+1}'$.
    Then an application of Lemma~\ref{l:cp3} with $G = K_j$, $H = K_j'$ implies that $R_{i}^{j} \cong R_{i}^{\prime j}\ast K_j/K_j'$.
\end{proof}

\section{Residual properties}\label{s:resid}
To construct our inductive rings, we need a residual chain of normal subgroups $K_i$ of $G$ whose successive quotients are orderable amenable.
To get the desired applications to virtual fiberings, we will need for each $K_i$ a finite index subgroup $H_i$ of $G$ such that $H_i/K_i$ is free abelian.
We show now that these conditions can be rephrased as a residual property of $G$, and that this residual property is equivalent to being RFRS.

Recall that a group $G$ is \emph{RFRS} if it has normal residual sequences of subgroups $H_i$ and $K_i$ with $K_{i} < H_{i}$, $H_i$ is finite index in $G$ and $H_{0}=K_{0}=G$, and $H_{i}/K_{i+1}$ is torsion-free abelian.
The following two lemmas, and their use in the following Theorem \ref{t:rvali}, were kindly explained to us by Sam Fisher and Dawid Kielak.
\begin{lemma}\label{l:cs}
    A finitely generated virtually abelian locally indicable group $Q$ is RFRS.
\end{lemma}
\begin{proof}
    Since $Q$ is locally indicable, there exists an epimorphism $\gf:Q \to \zz$.
    After choosing a section of $\gf$, we identify $\zz$ with a subgroup of $Q$.
    Let $G:=\ker \gf$, then $Q = G \zz$.
    Note that $G$ is also finitely generated virtually abelian locally indicable, of smaller cohomological dimension, and is stabilized by the conjugation action of $\zz$.
    By induction on cohomological dimension, we can assume that $G$ is RFRS. Therefore $G$ has RFRS chains $H_i$ and $K_i$, and an easy exercise shows that we can assume each $H_i$ and $K_i$ is characteristic in $G$, and hence preserved by the action of $\zz$.

    Let $A \nsgp Q$ be a normal abelian subgroup of finite index.
    Then $A \cap \zz$ is a finite index subgroup of $\zz$ which acts trivially on $A$.
    Since $G/H_{i}$ is finite, we can choose a residual chain $B_{i}< A \cap \zz$, such that $B_{i}$ acts trivially on $G/H_{i}$ for each $i \geq 0$.

    Define $H'_{0}=K'_{0}:=Q$, and $H'_{i+1}:=H_{i} B_{i}$ and $K'_{i+1}:=K_{i}$ for $i \geq 0$.
    We claim that $H'_{i}$ and $K'_{i}$ form RFRS chains for $Q$.
    Obviously, each $H'_{i}$ has finite index in $Q$ and contains $K'_{i}$, each $K'_{i}$ is normal in $Q$, and $H'_{0}/K'_{1}=Q/G=\zz$.

    We now check that $H'_{i}$ is normal in $Q$ for $i \geq 1$.
    Any element of $Q$ can be written as a product $z g$ with $z \in \zz$ and $g \in G$.
    Then we have
    \begin{align*}
        z g H_{i} B_{i} &= z B_{i} g H_{i} \text{ since $B_{i}$ acts trivially on $G/H_{i}$},\\
        &= B_{i} z g H_{i} \text{ since $B_{i} < \zz$},\\
        &= B_{i} H_{i} z g \text{ since $H_{i}$ is normal in $Q$},\\
        &= H_{i} B_{i} z g.
    \end{align*}

    Next we show that $H'_{i+1} / K'_{i+2}$ is torsion-free abelian for $i \geq 0$.
    Since $A \cap H_{i}$ has finite index in $H_{i}$, the image of $A \cap H_{i}$ has finite index in $H_{i}/K_{i+1}$.
    Since $A \cap \zz$ acts trivially on $A \cap H_{i}$, and $H_{i}/K_{i+1}$ is finitely generated free abelian, it follows that $A \cap \zz$, and therefore $B_{i}$, act trivially on $H_{i}/K_{i+1}$.
    Thus $H'_{i+1} / K'_{i+2} = H_{i}B_{i} / K_{i+1} = (H_{i} / K_{i+1}) \times B_{i}$ is torsion-free abelian.

    To finish the proof we note that $H'_{i}$ is a residual sequence since both $H_{i}$ and $B_{i}$ are.
\end{proof}
\begin{lemma}\label{l:rrfrs}
    Countable residually RFRS groups are RFRS.
\end{lemma}
\begin{proof}
    Given a sequence of RFRS groups $G^{j}$ with RFRS chains $H^{j}_{i}$, $K^{j}_{i}$, the countable direct product $G=G^{1} \times G^{2} \times \cdots$ is RFRS with the RFRS chains
    \[
    H_{i}:=H^{1}_{i} \times H^{2}_{i-1} \times \dots \times H^{i}_{1} \times G^{i+1} \times G^{i+2} \dots
    \]
    and
    \[
    K_{i}:=K^{1}_{i} \times K^{2}_{i-1} \times \dots \times K^{i}_{1} \times G^{i+1} \times G^{i+2} \dots
    \]
    Since subgroups of RFRS groups are RFRS, the claim follows.
\end{proof}
\begin{Theorem}\label{t:rvali}
    Let $G$ be a finitely generated group.
    The following conditions are equivalent:
    \begin{enumerate}
        \item\label{i:chain}
        There are sequences of normal subgroups $K_i$ and $H_i$ of $G$ with $H_0 = K_0 = G$ such that $K_i$ is a residual chain, $H_i$ has finite index and contains $K_{i}$, and $K_i/K_{i+1}$ and $H_i/K_i$ are free abelian of finite rank.
        \item\label{i:rvali}
        $G$ is residually (virtually abelian locally indicable).
        \item\label{i:rfrs}
        $G$ is RFRS.
    \end{enumerate}
\end{Theorem}
\begin{proof}
    The implication \eqref{i:chain}$\implies$\eqref{i:rvali} is clear.
    The implication \eqref{i:rvali}$\implies$\eqref{i:rfrs} is immediate from Lemmas~\ref{l:cs} and \ref{l:rrfrs}.
    To prove \eqref{i:rfrs}$\implies$\eqref{i:chain}, note that for RFRS chains $H_{i}$, $K_{i}$ we have $K_i/K_{i+1} \nsgp H_{i}/K_{i+1}$.
\end{proof}

\section{Fibering and characters}\label{s:fc}
We say that a complex $C_{*}$ of free $R$-modules has \emph{type $FP_{n}(R)$} if it is $R$-chain homotopy equivalent to a complex of free $R$-modules which have finite rank in degrees $\leq n$.
For a right $R$-module $M$ we denote $H_{*}(C_{*}; M):=H_{*}(M \otimes_{R} C_{*})$ and if $M=D$ happens to be a skew field we define $D$-Betti numbers of $C_{*}$ by $b_{k}(C_{*}; D):=\dim_{D} H_{k}(C_{*}; D)$.

If $Y$ is a $G$-$CW$-complex and $R=\zz G$ we define the equivariant homology of $Y$ with $M$-coefficients: $H_{*}(Y; M):=H_{*}(C_{*}(Y); M)$.

We can now prove our main theorem connecting $D_{\F G}$-Betti numbers and Novikov homology in finite covers.
Note that Jaikin-Zapirain \cite{j21}*{Corollary 1.3} showed that residually (amenable locally indicable) groups have $D_{\FG}$ over any field $\F$, and $D_{\F G}$ is universal in the sense that for any other $\F G$-skew field $D$, $b_{*}(Y; \DG) \leq b_{*}(Y; D)$.
In particular, $D_{\FG}$ exists for any RFRS group.
This leads to the following:
\begin{Theorem}\label{t:vc}
    Suppose that $G$ is finitely generated RFRS and $C_{*}$ is a finite chain complex of finitely generated free $\FG$-modules.
    Then there is a finite index subgroup $H$ and a character $\psi: H \to \zz$ such that
    \[
    H_{i}(C_{*} ; \FH^{\pm\psi}) = (\FH^{\pm\psi})^{[G:H]b_i(C_{*}; D_{\FG})} \text{ for all } i.
    \]
\end{Theorem}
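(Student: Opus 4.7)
The plan is to bring $C_{*} \otimes_{\FG} D_{\FG}$ into Smith normal form over $D_{\FG}$ and then transfer the change of basis to $\FH^{\pm\psi}$ for a well-chosen finite-index normal subgroup $H \nsgp G$ and character $\psi \colon H \to \zz$, so that the normal form persists after tensoring with $\FH^{\pm\psi}$.

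First, since $D_{\FG}$ is a skew field, $C_{*} \otimes_{\FG} D_{\FG}$ is chain homotopy equivalent (over $D_{\FG}$) to the zero-differential complex whose $i$-th term is $H_{i}(C_{*}; D_{\FG})$. Fix matrices $P, Q, P^{-1}, Q^{-1}$ over $D_{\FG}$ implementing this equivalence; since $C_*$ is finite and finitely generated, they involve only finitely many entries. By Theorem~\ref{t:union}, these entries all lie in some inductive ring $R_{n}^{0}$. Choose $H \nsgp G$ of finite index with $K_{n} \subset H$ and $H/K_{n}$ free abelian, available by the characterization of residually (virtually abelian locally indicable) groups. Set $K_{j}' = K_{j} \cap H$, so $K_{n}' = K_{n}$; by Theorem~\ref{t:finiteindex}, $R_{n}^{0} = R_{n}^{\prime 0} * G/H$, and hence each entry of our matrices, regarded as an $\FH$-linear map via restriction of scalars, becomes a $[G:H] \times [G:H]$ block with entries in $R_{n}^{\prime 0}$.

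The key step is to exhibit a single character $\psi$ for which the finitely many entries arising in $R_{n}^{\prime 0}$ lie in $\FH^{\psi} \cap \FH^{-\psi}$. Fix any orders on the successive quotients $Q_{j}' = K_{j}'/K_{j+1}'$ for $0 \le j < n$; iterating Lemma~\ref{l:MNdecomp}(2), the resulting dictionary order $<_d$ is a group order on the free abelian group $H/K_{n}$ and
\[
R_{n}^{\prime 0} \subset \FK_{n} *_{<_d} (H/K_{n}).
\]
Each matrix entry $f \in R_{n}^{\prime 0}$ admits a representation $f = x y^{-1}$ with $x, y \in D_{\FK_{n}} * (H/K_{n})$; let $y'$ be an analogous denominator for the expansion with respect to the reversed order $<_d^{\mathrm{op}}$. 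Let $S \subset H/K_{n}$ be the union of the supports of all these $y$ and $y'$. Since $H/K_{n} \cong \zz^{m}$ and $S$ is finite, Lemma~\ref{l:character} yields a character $\psi \colon H/K_{n} \to \zz$ that strictly preserves $<_d$ on $S$; lifted to $H$, the character $-\psi$ then strictly preserves $<_d^{\mathrm{op}}$ on $S$, so Lemma~\ref{l:nov}, applied once for each order, places every entry in both $\FH^{\psi}$ and $\FH^{-\psi}$.

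Finally, the enlarged block matrices $P, Q, P^{-1}, Q^{-1}$ now have all their entries in $\FH^{\pm\psi}$ and implement a chain homotopy equivalence over $\FH^{\pm\psi}$ between $C_{*}|_{\FH} \otimes_{\FH} \FH^{\pm\psi}$ and a zero-differential complex whose $i$-th term is $(\FH^{\pm\psi})^{[G:H] b_{i}(C_{*}; D_{\FG})}$; this immediately yields the desired homology identification. The main obstacle I expect is arranging a single $\psi$ to work for both $\FH^{\psi}$ and $\FH^{-\psi}$; the resolution above is to lump the denominator supports from both orders $<_d$ and $<_d^{\mathrm{op}}$ into the same finite set $S$ before invoking Lemma~\ref{l:character}.
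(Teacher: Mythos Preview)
Your proposal is correct and follows essentially the same route as the paper's proof: Smith normal form over $D_{\FG}$, Theorem~\ref{t:union} to land in some $R_{n}^{0}$, Theorem~\ref{t:finiteindex} to pass to $R_{n}^{\prime 0}$ for a suitable finite-index $H$, the iterated Lemma~\ref{l:MNdecomp}(2) to embed in $\FK_{n} *_{<_d} (H/K_{n})$, and then Lemmas~\ref{l:character} and~\ref{l:nov} to produce $\psi$. The only difference is cosmetic: the paper observes that the Ore fraction $f = x/y$ is order-independent, so the \emph{same} denominator $y$ works for $<_d^{\mathrm{op}}$ and $-\phi$ automatically preserves the reversed order on $\supp(y)$---your separate $y'$ is harmless but unnecessary.
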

\begin{proof}
    The theorem follows quickly from the following:
    \subsection*{Claim} Given any finite collection of elements $f_1, \dots, f_n$ in $D_{\FG}$, there is a finite index subgroup $H$ and a character $\psi: H \to \zz$ such that for any section $s:G/H \to G$ and for each $f_k = \sum_{g \in G/H} f_k^g s_{g} \in D_{\FH}*G/H$, the terms $f_k^g$ are in the Novikov ring of $\pm \psi$.
    In particular, the matrix of the right multiplication by $f_{k}$ on $D_{\FG}$, thought of as a linear map over $D_{\FH}$ with respect to the basis $(s_{g})$, has entries in the Novikov rings of $\pm \psi$.
    \begin{proof}
        There exists $m$ such that each $f_k$ is in $R_{m}^{0}$ by Theorem \ref{t:union}.
        Let $K_{i}'=H_{m} \cap K_{i}$, $Q'_{i}=K'_{i} / K'_{i+1}$ and let $R_{m}^{\prime 0}$ be the corresponding inductive ring for $H_{m}$.
        Note that $Q'_{i} < Q_{i}$ is finite index, so the orders on $Q'_{i}$ and $Q_{i}$ are in 1-1 correspondence.
        Also note that $K'_{m}=K_{m}$.
        To simplify notation, set $H=H_{m}$, $K=K_{m}$ and $Q=H/K$.
        We have that $Q$ is finitely generated free abelian, and therefore $Q=\bigoplus_{i=0}^{m-1} Q'_{i}$.

        Note that the coefficients $f_{k}^{g}$ depend on the choice of the section $s: G/H \to G$, but whether they are in the Novikov ring does not.
        So we fix a section.

        Choose arbitrary orders on each of the $Q'_i$ and the induced dictionary order $<$ on $Q$.
        Then the projections $\bigoplus_{i=j}^{m-1} Q'_{i} \to \ Q'_{j}$ are order preserving.

        By induction, using the definition of inductive rings and applying Lemma \ref{l:MNdecomp}(2):
        \[
        R_{m}^{\prime 0} < \Bigl(\cdots\bigl(( \FK*_{<}Q'_{m-1}) *_{<}Q'_{m-2} \bigr)*_{<} \cdots \Bigr)*_{<} Q'_{0} = \FK *_{<} \bigoplus_{i=0}^{m-1} Q'_{i}= \FK *_{<} Q.
        \]
        Since, by Theorem \ref{t:finiteindex}, each of the $f_k^g$-terms in $D_{\FH}$ are in $R_{m}^{\prime 0}$, we have $f_k^g \in \FK *_{<} Q$.

        Furthermore, each $f_k^g$ is an element of $\Ore(D_{\FK} \ast Q)$, and hence can be expressed as a fraction $x^{g}_k/y^{g}_k$ with $x^{g}_k, y^{g}_k \in D_{\FK} \ast Q$.
        By Lemma \ref{l:character}, there is an integral character $\gf: Q \to \zz$ which strictly preserves the order on the union of supports of $y^{g}_{k}$.
        Let $\psi: H \to \zz$ be the induced character and let $J = \ker \psi$.
        Then by Lemma~\ref{l:nov} each $f_k^g \in \FJ *_{<} \zz=\FH^{\psi}$.

        Flipping the order on each $Q'_i$ reverses the order on the union of supports, as does $-\gf$.
        So $f_k^g \in \FH^{-\psi}$, as well.
    \end{proof}

    To finish the proof, we put the chain complex $ D_{\FG} \otimes_{\FG} C_{*}$ into Smith normal form, so the matrices of differentials have some number of 1's on the diagonal and 0's elsewhere.
    Since the chain complex is finite there are only finitely many $\DG$-elements in the change of basis matrices (and their inverses).
    After passing to a finite index subgroup $H$ as in the claim, we can assume that all the entries of the resulting matrices over $D_{\FH}$ lie in the Novikov rings of $\pm\psi: H \to \zz$.
    Therefore, as matrices over $\FH^{\pm\psi}$ these are still invertible, and bring $\FH^{\pm\psi} \otimes_{\FH} C_{*}$ into $[G:H]$ copies of the same normal form.
    This implies the theorem.
\end{proof}

If in the theorem we just assume that $C_{i}$ is a finitely generated free $\FG$-module for $i \leq n$, then since $D_{\FG}$ is a skew field, the image of $D_{\FG} \otimes_{\FG} \partial_{n+1}$ is a finite dimensional $D_{\FG}$-vector space.
Therefore, we can truncate $C_{*}$ at degree $(n+1)$ and remove some $\FG$ summands from $C_{n+1}$ to obtain a finite subcomplex of finitely generated free $\FG$-modules $C_{*}' < C_{*}$ with the same $D_{\FG}$-homology in degrees up to $n$.
For any $\FG$-module $M$, the induced map $i_{*}:H_{i}(C_{*}'; M) \to H_{i}(C_{*}; M)$ is an isomorphism for $i<n$ and an epimorphism for $i=n$, hence applying the theorem to $C_{*}'$ gives the following:
\begin{corollary}\label{c:computation}
    Suppose that $G$ is finitely generated RFRS and $C_{*}$ is a chain complex of type $FP_{n}(\FG)$.
    Then there are a finite index subgroup $H$ and a character $\psi: H \to \zz$ such that
    \begin{enumerate}
        \item $H_i(C_{*}; \FH^{\pm\psi}) = (\FH^{\pm\psi})^{[G:H]b_i(C_{*}; D_{\FG})} \text{ for } i < n$.
        \item $H_n(C_{*}; \FH^{\pm\psi})$ is a quotient of $(\FH^{\pm\psi})^{[G:H]b_{n}(C_{*}; D_{\FG})}$.
    \end{enumerate}
\end{corollary}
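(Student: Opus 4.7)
The plan is to reduce the statement to Theorem~\ref{t:vc} by replacing $C_{*}$ with a finite subcomplex $C_{*}'$ of finitely generated free $\FG$-modules whose $D_{\FG}$-Betti numbers agree with those of $C_{*}$ through degree $n$, and such that the inclusion $\iota:C_*'\hookrightarrow C_*$ induces the appropriate iso/epi on $H_{*}(-;M)$ for every right $\FG$-module $M$.

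For the reduction, I would first use the $FP_n(\FG)$ hypothesis to replace $C_*$ by a chain homotopy equivalent complex whose $C_i$ is finitely generated free for $i\le n$; chain homotopy equivalence preserves homology with arbitrary coefficients, so this does not affect the statement. Next I would truncate at degree $n+1$ as follows. Because $D_{\FG}$ is a skew field and $C_n$ is finitely generated free, $D_{\FG}\otimes_{\FG}C_n$ is a finite-dimensional $D_{\FG}$-vector space, so the image of $1\otimes \partial_{n+1}:D_{\FG}\otimes C_{n+1}\to D_{\FG}\otimes C_n$ is also finite-dimensional. Choosing a basis of the free $\FG$-module $C_{n+1}$, finitely many basis elements $e_1,\dots,e_N$ suffice for the $1\otimes\partial_{n+1}(e_k)$ to span this image. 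Define $C_{n+1}'=\langle e_1,\dots,e_N\rangle$, $C_i'=C_i$ for $i\le n$, and $C_i'=0$ for $i>n+1$. This is a finite chain complex of finitely generated free $\FG$-modules.

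Next I would analyze the inclusion $\iota:C_*'\hookrightarrow C_*$. For any right $\FG$-module $M$, $\iota_{*}$ is an isomorphism on $H_i$ for $i<n$ because the two complexes agree through degree $n$, and it is surjective on $H_n$ because $\im(\partial_{n+1}'\otimes 1_M)\subset \im(\partial_{n+1}\otimes 1_M)$ so the target is a quotient of the source. For $M=D_{\FG}$, the choice of $e_1,\dots,e_N$ forces these two images to coincide, so $\iota_*$ is an isomorphism in degree $n$ as well. Hence $b_i(C_*';D_{\FG})=b_i(C_*;D_{\FG})$ for all $i\le n$.

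To conclude, apply Theorem~\ref{t:vc} to the finite complex $C_{*}'$ to produce a finite index subgroup $H\nsgp G$ and a character $\psi:H\to\zz$ such that $H_i(C_*';\FH^{\pm\psi})=(\FH^{\pm\psi})^{[G:H]b_i(C_*';D_{\FG})}$ for all $i$; combining with the iso/surjection behavior of $\iota_*$ from the previous paragraph yields statements (1) and (2). There is no serious obstacle beyond the routine truncation; the one place that deserves emphasis is the use of the skew-field property of $D_{\FG}$ in the construction of $C_*'$, which is exactly what guarantees that finitely many elements of $C_{n+1}$ preserve $b_n$, and so permits the reduction to Theorem~\ref{t:vc}.
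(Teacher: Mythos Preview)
Your proposal is correct and follows essentially the same route as the paper: replace $C_*$ (via the $FP_n$ hypothesis) by a complex of free modules that are finitely generated in degrees $\le n$, then use the skew-field property of $D_{\FG}$ to select finitely many free summands of $C_{n+1}$ whose images span $\im(1\otimes\partial_{n+1})$, yielding a finite subcomplex $C_*'$ with the same $D_{\FG}$-Betti numbers through degree $n$ and with $\iota_*$ an isomorphism for $i<n$ and an epimorphism for $i=n$ with any coefficients; then apply Theorem~\ref{t:vc}. This is exactly the paper's argument, carried out with a bit more detail.
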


Combining this with \cite{fgs10}*{Theorem 8 and Proposition 5} we obtain:
\begin{corollary}\label{c:dom}
    Suppose that $G$ is finitely generated RFRS and $C_{*}$ is a chain complex of type $FP_n(\FG)$ with $H_i(C_{*}; D_{\FG}) = 0$ for $i \le n$.
    Then there is a finite index subgroup $H$ and a map $\psi: H \to \zz$ such that $C_{*} $ is of type $FP_n(\F\ker \psi)$.
\end{corollary}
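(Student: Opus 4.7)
The plan is to apply Corollary \ref{c:computation} to extract a finite index subgroup $H$ and character $\psi: H \to \zz$ such that the Novikov homology of $C_*$ with coefficients in $\FH^{\pm\psi}$ vanishes up to degree $n$, and then to invoke the Sikorav--Bieri--Neumann--Strebel--Renz style criterion from \cite{fgs10} to upgrade this vanishing to the desired $FP_n(\F\ker\psi)$ finiteness property.

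More concretely, first I would observe that since $D_{\FG}$ is a skew field, the hypothesis $H_i(C_*;D_{\FG})=0$ for $i\le n$ is equivalent to $b_i(C_*;D_{\FG})=0$ for $i\le n$. Apply Corollary \ref{c:computation} to $C_*$ to produce a finite index subgroup $H\nsgp G$ and a character $\psi: H\to\zz$. By part (1) of that corollary, for $i<n$ we have
\[
H_i(C_*;\FH^{\pm\psi}) = (\FH^{\pm\psi})^{[G:H]\, b_i(C_*;D_{\FG})} = 0,
\]
and by part (2), $H_n(C_*;\FH^{\pm\psi})$ is a quotient of $(\FH^{\pm\psi})^{[G:H]\,b_n(C_*;D_{\FG})}=0$, so it vanishes too. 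Thus $H_i(C_*;\FH^{\pm\psi})=0$ for all $i\le n$ and for \emph{both} signs of $\psi$.

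Next I would feed this simultaneous vanishing into the Sikorav-type criterion of \cite{fgs10}*{Theorem 8 and Proposition 5}, which characterizes when a character lies in the $\Sigma^n$-invariants of a chain complex in terms of vanishing Novikov homology, and relates membership of $\pm\psi$ in $\Sigma^n$ to the kernel $\ker\psi$ being of type $FP_n$. The conclusion is precisely that $C_*$ is of type $FP_n(\F\ker\psi)$, which is what we want. Since Corollary \ref{c:computation} has already done the heavy lifting of linking $D_{\FG}$-Betti numbers to Novikov homology in a finite cover, no additional obstacle appears here; the only subtlety is making sure that one applies Corollary \ref{c:computation} to a finite collection of elements (and gets $\pm\psi$ simultaneously), but this is already baked into the statement.

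The main conceptual step, and the only thing to be careful about, is that the $FP_n$-upgrade from Novikov vanishing requires vanishing for \emph{both} $\psi$ and $-\psi$; this is exactly why Corollary \ref{c:computation} is phrased with the $\pm\psi$ symmetry, and why in the claim inside its proof one is careful to note that flipping the orders on each $Q'_i$ yields the opposite character. Otherwise the argument is a straightforward bookkeeping application of the two ingredients already in hand.
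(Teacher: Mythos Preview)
Your proposal is correct and matches the paper's approach essentially verbatim: apply Corollary~\ref{c:computation} to get vanishing of $H_i(C_*;\FH^{\pm\psi})$ for $i\le n$, then invoke \cite{fgs10}*{Theorem~8 and Proposition~5} to conclude type $FP_n(\F\ker\psi)$. The only detail the paper adds that you omit is the remark that \cite{fgs10} is stated over $\zz$, so one needs \cite{f21}*{Theorem~5.3} to justify the passage to general $\F$-coefficients.
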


We note that \cite{fgs10} deals only with $\zz$-coefficients, but \cite{f24}*{Theorem 5.3} shows the arguments work for any ring $R$.
We also note that for chain complexes of free $R$-modules the notion of \emph{finite $n$-type over $R$} used in \cite{fgs10} is equivalent to type $FP_{n}(R)$.

Taking $C_{*}$ to be the chain complex of the universal cover of a $K(G,1)$ gives:
\begin{corollary}
    Suppose that $G$ is RFRS, of type $FP_n(\F)$, and $H_i(G; D_{\FG}) = 0$ for $i \le n$.
    Then there is a finite index subgroup $H$ and a map $\psi: H \to \zz$ such that $\ker \psi$ is of type $FP_n(\F)$.
\end{corollary}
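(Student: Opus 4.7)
The plan is to deduce this statement directly from Corollary~\ref{c:dom} by taking $C_\ast$ to be an appropriate partial resolution of the trivial module~$\FF$ over $\FG$. Essentially the entire content of the corollary is already packaged in Corollary~\ref{c:dom}; what is left is just a translation between the chain-complex and group-theoretic versions of the $FP_n$ condition.

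Since $G$ is of type $FP_n(\FF)$, by definition the trivial module $\FF$ admits a projective resolution over $\FG$ that is finitely generated in degrees $\le n$, and we may arrange that the modules are free and finitely generated in degrees $\le n$. Truncating past degree $n+1$, I would take $C_\ast$ to be a finite chain complex of free $\FG$-modules that is finitely generated in degrees $\le n$, with $H_0(C_\ast) = \FF$ and $H_i(C_\ast) = 0$ for $0 < i \le n$. Then $C_\ast$ is of type $FP_n(\FG)$, and for any right $\FG$-module $M$ one has $H_i(G; M) = H_i(C_\ast; M)$ for $i < n$, while $H_n(G; M)$ is a quotient of $H_n(C_\ast; M)$. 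The hypothesis $H_i(G; D_{\FG}) = 0$ for $i \le n$ therefore translates into $H_i(C_\ast; D_{\FG}) = 0$ for $i \le n$.

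Next, I invoke Corollary~\ref{c:dom}, using that $G$ is finitely generated residually (virtually abelian locally indicable) (finite generation is implied by type $FP_1(\FF)$). The corollary produces a finite index subgroup $H \le G$ and a character $\psi : H \to \zz$ such that the chain complex $C_\ast$, viewed over $\F\ker\psi$ by restriction of scalars, is of type $FP_n(\F\ker\psi)$.

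Finally, because $C_\ast$ was a partial free resolution of the trivial module $\FF$ over $\FG$, restriction to $\F\ker\psi$ preserves freeness and exactness, so $C_\ast$ is also a partial free resolution of $\FF$ over $\F\ker\psi$ of the appropriate length; together with the $FP_n(\F\ker\psi)$ condition, this is exactly the statement that $\ker\psi$ is of type $FP_n(\FF)$. The only genuine step in this derivation is the application of Corollary~\ref{c:dom}; the rest is bookkeeping converting between the group-theoretic and chain-complex versions of $FP_n$, so I do not expect any serious obstacle here.
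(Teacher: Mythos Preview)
Your approach is exactly the paper's: the paper's entire proof is the single sentence ``Taking $C_{*}$ to be the chain complex of the universal cover of a $K(G,1)$,'' which is just your partial free resolution, and then applies Corollary~\ref{c:dom}. One small correction: since you keep $C_{n+1}$, you in fact have $H_n(G;M) = H_n(C_\ast;M)$ rather than merely a quotient relation, and it is this equality (not the quotient statement you wrote) that lets you conclude $H_n(C_\ast; D_{\FG}) = 0$ from $H_n(G; D_{\FG}) = 0$.
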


\section{Homology growth}

For a finite complex $X$ the \emph{upper} and \emph{lower $\F $-homology growth} are defined by
\begin{align*}
    \ub_{k}(X;\F )&:=\inf_{X'\to X}\left(\sup_{X''\to X'} \frac{b_k(X'';\F )}{\abs{X''\to X}}\right),\\
    \lb_{k}(X;\F )&:=\sup_{X'\to X}\left(\inf_{X''\to X'} \frac{b_k(X'';\F )}{\abs{X''\to X}}\right).
\end{align*}
where $\inf$ and $\sup$ are taken over all finite covers and $\abs{X'' \to X}$ denotes the degree of a cover.
(We don't require the covers to be regular, or even connected, but if $X$ is disconnected we require the degree to be the same over all components.) At this point, we do not know any example with $\lb_{k}(X;\F_p ) \neq \ub_{k}(X;\F_p)$.

We collect some properties of these quantities from \cite{aos24}*{Section 2}.
\begin{Prop}
    \leavevmode
    \begin{enumerate}
        \item $\lb_{k}(X; \F)$ and $\ub_{k}(X; \F)$ are homotopy invariant.
        \item $\lb_{k}(X; \F) \leq \ub_{k}(X; \F) \leq$ number of $k$-cells in $X$.
        \item If $X' \to X$ is a finite cover, then
        \[
        \lb(X'; \F) = \abs{X' \to X} \lb(X; \F) \quad \text{and} \quad \ub(X'; \F) = \abs{X' \to X} \ub(X; \F).
        \]
        \item\cite{aos24}*{Lemma 2.7} If $X=Y\amalg Z$, then
        \[
        \lb(X; \F)=\lb(Y; \F)+\lb(Z; \F) \quad \text{and} \quad \ub(X; \F)=\ub(Y; \F)+\ub(Z; \F).
        \]
        \item\cite{aos24}*{Lemma 2.8} If $X$ is connected, then $\lb_{k}(X; \F)$ and $\ub_{k}(X; \F)$ can be computed using only connected covers.
        \item\cite{aos24}*{Corollary 2.5} $\lb_{k}(X;\qq ) = \ub_{k}(X;\qq )$.
    \end{enumerate}
\end{Prop}

For any continuous map $f:X\to X$, the mapping torus $T_{f}$ is homotopy equivalent to a complex with at most twice the number of cells of $X$.
Since $T_{f^{n}}$ is homotopy equivalent to an $n$-fold cover of $T_{f}$, the properties (1)--(3) above imply that $\ub_{*}(T_{f}; \F)=0$.
In fact, if the chain complex of the universal cover, $C_{*}(\widetilde X; \F)$, is of type $FP_n(\F\pi_{1}X)$, then $\lb_{k}(X; \F)$ and $\ub_{k}(X; \F)$ are well defined and finite for $k\leq n$ and we have the following version of L\"uck's Mapping Torus Theorem:
\begin{Theorem}[\cite{aos24}*{Theorem 2.10}]\label{t:mappingtorustheorem}
    Let $X$ be a complex with $C_{*}(\widetilde X; \F)$ of type $FP_n(\F\pi_{1}X)$, $f:X\to X$ a self-homotopy equivalence and $T_f$ its mapping torus.
    Then for $k\leq n$
    \[
    \ub_k(T_f;\F )=0.
    \]
\end{Theorem}
The universality of $D_{\F G}$ leads to the following:
\begin{Theorem}\label{t:equiv}
    Let $G$ be a residually (amenable, locally indicable, residually finite) group and let $Y$ be a free cocompact $G$-complex.
    Then
    \[
    b_{*}(Y; D_{\F G}) = \inf_{\substack{H <G \\
    [G:H] < \infty}} \frac{b_{*}(Y/H; \F)}{[G:H]}.
    \]
\end{Theorem}
\begin{proof}
    Let $H < G$ be a finite index subgroup of $G$.
    Note that the augmentation map $\F H \to \F$ exhibits $\F$ as an $\F H$-skew field, and hence the universality of $D_{\F H}$ implies
    \[
    b_{*}(Y; D_{\F H}) \leq b_{*}(Y; \F)= b_{*}(Y/H ; \F).
    \]
    The multiplicativity of the skew-field Betti numbers implies $\leq$ inequality in the desired formula.

    By considering the action of the stabilizers on connected components of $Y$ the computation of $b_{*}(Y; D_{\F G})$ always reduces to the finitely generated case, see \cite{aos24}*{Section 3} for details.
    So, for the opposite inequality it is enough to consider finitely generated $G$, Since skew-field Betti numbers are integers, applying \cite{j21}*{Theorem 1.2} to a residual sequence of amenable locally indicable residually finite quotients, implies that for some such quotient $K \nsgp G \onto A$ we have $b_{*}(Y; D_{\F G}) = b_{*}(Y/K; D_{\F A} )$.
    Applying \cite{lls11}*{Theorem 0.2} to the $A$-complex $Y/K$ allows us to find a further finite quotient such that $b_{*}(Y/K; D_{\F A})$ is approximated within any given $\epsilon$ by its normalized usual Betti numbers.
\end{proof}
In particular, if $X$ is a finite connected complex with residually (amenable locally indicable residually finite) fundamental group $G$, then $b_{*}(\widetilde X; D_{\F G }) = \lb(X; \F)$ as $b_{*}(\widetilde X; D_{\F G })$ is multiplicative by Lemma \ref{l:DG}(4).
Using Corollary~\ref{c:dom} in place of Fisher's theorem in the proof of Theorem B of \cite{aos24} allows us to drop the asphericity assumption.
\begin{Theorem}\label{t:B}
    If $X$ is a finite connected complex whose fundamental group is virtually RFRS, then $\lb_{\leq n}(X;\F )=0$ if and only if $\ub_{\leq n}(X;\F )=0$.
\end{Theorem}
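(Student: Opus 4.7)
The direction $\ub_{\leq k}(X;\F)=0\Rightarrow \lb_{\leq k}(X;\F)=0$ is immediate from $\lb_{i}\leq\ub_{i}$, so the content is the reverse implication. My plan is to run the argument of \cite{aos24}*{Theorem B} almost verbatim, with Corollary~\ref{c:dom} substituted for Fisher's theorem so that no asphericity hypothesis is required.

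Write $G=\pi_{1}X$ and $C_{*}=C_{*}(\wt X)$, a finite chain complex of finitely generated free $\F G$-modules. Jaikin-Zapirain's identification $\lb_{i}(X;\F)=b_{i}(C_{*};D_{\F G})$, recalled in the paragraph preceding the theorem, converts the hypothesis $\lb_{\leq k}(X;\F)=0$ into $H_{i}(C_{*};D_{\F G})=0$ for all $i\leq k$. Corollary~\ref{c:dom} with $n=k$ then produces a finite index subgroup $H<G$ and a character $\psi:H\to\zz$ such that $C_{*}$ has type $FP_{k}(\F\ker\psi)$. Let $X'\to X$ be the finite cover corresponding to $H$; by multiplicativity of $\ub$ in finite covers it suffices to prove $\ub_{\leq k}(X';\F)=0$.

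To finish I would invoke the $\F$-coefficient analogue of L\"uck's mapping-torus argument alluded to in the paragraph before the theorem. The $FP_{k}(\F\ker\psi)$ condition on $C_{*}=C_{*}(\wt{X'})$ says exactly that the infinite cyclic cover $X'_{\infty}\to X'$ associated with $\psi$ has finite $\F$-type through degree $k$, and a Wang-style long exact sequence then bounds the $\F$-Betti numbers $b_{i}(X'_{m};\F)$ independently of $m$ for $i\leq k$, where $X'_{m}\to X'$ denotes the degree-$m$ cyclic cover. Using $X'_{m}$ as the outer cover in the definition of $\ub$, a standard transfer estimate drives the ratio $b_{i}(Y;\F)/|Y\to X'|$ to zero as $m\to\infty$ uniformly over further finite covers $Y\to X'_{m}$, giving $\ub_{\leq k}(X';\F)=0$.

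The main obstacle is this final mapping-torus step: one has to verify that the argument from \cite{aos24} really uses only finiteness of the chain complex over $\F\ker\psi$, and not topological fibering of an aspherical space. Once this is granted -- and it is essentially what is already checked in \cite{aos24} -- the stronger conclusion of Corollary~\ref{c:dom}, which produces such a $\psi$ for any chain complex of type $FP_{k}(\F G)$ rather than only for resolutions of $\F$, lets us discard the asphericity hypothesis present in the original statement.
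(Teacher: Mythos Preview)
Your proposal is correct and matches the paper's approach exactly: the paper does not give a self-contained proof but simply states that one runs the proof of \cite{aos24}*{Theorem~B} with Corollary~\ref{c:dom} substituted for Fisher's theorem, which is precisely what you outline. Your elaboration of the mapping-torus/Wang-sequence step and the transfer estimate is a faithful unpacking of what that substitution entails.
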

\begin{proof}
    Since $\lb_k$ and $\ub_{k}$ are multiplicative we can assume that $\pi_1(X)$ is a finitely generated RFRS group.
    Hence, by Theorem~\ref{t:rvali}, $\pi_1(X)$ is residually (virtually abelian locally indicable), and we can apply Theorem~\ref{t:equiv} to get $\lb_{k}(X;\F )= b_k(X; D_{\F\pi_{1}X})$.

    So, suppose that $b_k(X; D_{\F \pi_1X})=0$ for all $k\leq n$.
    By Corollary~\ref{c:dom}, there is a finite cover $X'\to X$ and a map $\psi: \pi_1(X') \rightarrow \zz$ so that $C_\ast(\widetilde X; \F)$ is type $FP_n(\F \ker \psi)$.
    If $\hat X'$ is the cover of $X'$ corresponding to $\ker \psi$, then $X'$ is homotopy equivalent to the mapping torus $T_g$ of the covering translation $g:\hat X' \to \hat X'$, so $\ub_k(X'; \F )=\ub_k(T_g;\F )=0$ by Theorem~\ref{t:mappingtorustheorem}, and by multiplicativity of $\ub$ we conclude that $\ub_k(X;\F )=0$.
\end{proof}
\begin{bibdiv}
    \begin{biblist}

\bib{a08}{article}{
    author = {Agol, Ian},
    title = {Criteria for virtual fibering},
    date = {2008},
    issn = {1753-8416},
    journal = {J. Topol.},
    volume = {1},
    number = {2},
    pages = {269\ndash 284},
    url = {https://mathscinet.ams.org/mathscinet-getitem?mr=2399130},
    review = {\MR{2399130}}, }

\bib{aos24}{article}{
    author = {Avramidi, Grigori},
    author = {Okun, Boris},
    author = {Schreve, Kevin},
    title = {Homology growth, hyperbolization, and fibering},
    date = {2024},
    journal = {Geom. Funct. Anal.)},
    volume = {34},
    pages = {303--376}, }

\bib{b07a}{article}{
    author = {Bieri, Robert},
    title = {Deficiency and the geometric invariants of a group},
    date = {2007},
    issn = {0022-4049},
    journal = {J. Pure Appl. Algebra},
    volume = {208},
    number = {3},
    pages = {951\ndash 959},
    url = {https://doi.org/10.1016/j.jpaa.2006.02.003}, note={With an appendix by Pascal Schweitzer},
    review = {\MR{2283437}}, }

\bib{br88}{article}{
    author = {Bieri, Robert},
    author = {Renz, Burkhardt},
    title = {Valuations on free resolutions and higher geometric invariants of groups},
    date = {1988},
    issn = {0010-2571},
    journal = {Comment. Math. Helv.},
    volume = {63},
    number = {3},
    pages = {464\ndash 497},
    url = {https://doi.org/10.1007/BF02566775},
    review = {\MR{960770}}, }

\bib{d79}{article}{
    author = {Dodziuk, Jozef},
    title = {${L}\sp{2}$ harmonic forms on rotationally symmetric {R}iemannian manifolds},
    date = {1979},
    issn = {0002-9939},
    journal = {Proc. Amer. Math. Soc.},
    volume = {77},
    number = {3},
    pages = {395\ndash 400},
    review = {\MR{81e:58004}}, }

\bib{el87}{article}{
    author = {Eizenbud, A.},
    author = {Lichtman, A.~I.},
    title = {On embedding of group rings of residually torsion free nilpotent groups into skew fields},
    date = {1987},
    issn = {0002-9947},
    journal = {Trans. Amer. Math. Soc.},
    volume = {299},
    number = {1},
    pages = {373\ndash 386},
    url = {https://doi.org/10.2307/2000499},
    review = {\MR{869417}}, }

\bib{fgs10}{article}{
    author = {Farber, Michael},
    author = {Geoghegan, Ross},
    author = {Sch\"{u}tz, Dirk},
    title = {Closed 1-forms in topology and geometric group theory},
    date = {2010},
    issn = {0002-9947},
    journal = {Russian Mathematical Surveys},
    volume = {65},
    number = {1},
    pages = {143\ndash 172} }

\bib{f24}{article}{
    author = {Fisher, Sam~P.},
    title = {Improved algebraic fibrings},
    date = {2024},
    journal = {Compos. Math.},
    volume = {160},
    number = {9},
    pages = {2203\ndash 2227},
    url = {https://doi.org/10.1112/S0010437X24007309},
    review = {\MR{4797112}}, }

\bib{g20}{article}{
    author = {Gr\"{a}ter, Joachim},
    title = {Free division rings of fractions of crossed products of groups with {C}onradian left-orders},
    date = {2020},
    issn = {0933-7741},
    journal = {Forum Math.},
    volume = {32},
    number = {3},
    pages = {739\ndash 772},
    url = {https://doi.org/10.1515/forum-2019-0264},
    review = {\MR{4095506}}, }

\bib{h01}{article}{
    author = {H\"{o}lder, Otto},
    title = {Die axiome der quantit\"{a}t und die lehre vom ma{\ss}},
    date = {1901},
    journal = {Ber. Verh. S\"{a}chs. Akad. Wiss. Leipzig Math. Phys. Kl.},
    volume = {53},
    number = {3},
    pages = {739\ndash 772}, }

\bib{h70}{article}{
    author = {Hughes, Ian},
    title = {Division rings of fractions for group rings},
    date = {1970},
    issn = {0010-3640},
    journal = {Comm. Pure Appl. Math.},
    volume = {23},
    pages = {181\ndash 188},
    url = {https://doi.org/10.1002/cpa.3160230205},
    review = {\MR{263934}}, }

\bib{j21}{article}{
    author = {Jaikin-Zapirain, Andrei},
    title = {The universality of {H}ughes-free division rings},
    date = {2021},
    issn = {1022-1824},
    journal = {Selecta Math. (N.S.)},
    volume = {27},
    number = {4},
    pages = {Paper No. 74, 33},
    url = {https://doi.org/10.1007/s00029-021-00691-w},
    review = {\MR{4292784}}, }

\bib{jl23}{arxiv}{
    author = {Jaikin-Zapirain, Andrei},
    author = {Linton, Marco},
    title = {On the coherence of one-relator groups and their group algebras},
    date = {2023}, eprint={2303.05976},
    url = {https://arxiv.org/pdf/2303.05976.pdf}, }

\bib{k20a}{article}{
    author = {Kielak, Dawid},
    title = {Residually finite rationally solvable groups and virtual fibring},
    date = {2020},
    issn = {0894-0347},
    journal = {J. Amer. Math. Soc.},
    volume = {33},
    number = {2},
    pages = {451\ndash 486},
    url = {https://doi.org/10.1090/jams/936},
    review = {\MR{4073866}}, }

\bib{l00}{article}{
    author = {Lichtman, A.I.},
    title = {On universal fields of fractions for free algebras},
    date = {2000},
    journal = {Journal of Algebra},
    volume = {231},
    pages = {652\ndash 676}, }

\bib{l93}{article}{
    author = {Linnell, Peter~A.},
    title = {Division rings and group von {N}eumann algebras},
    date = {1993},
    issn = {0933-7741},
    journal = {Forum Math.},
    volume = {5},
    number = {6},
    pages = {561\ndash 576},
    review = {\MR{MR1242889 (94h:20009)}}, }

\bib{lls11}{article}{
    author = {Linnell, Peter},
    author = {L\"{u}ck, Wolfgang},
    author = {Sauer, Roman},
    title = {The limit of {$\mathbb F_p$}-{B}etti numbers of a tower of finite covers with amenable fundamental groups},
    date = {2011},
    issn = {0002-9939},
    journal = {Proc. Amer. Math. Soc.},
    volume = {139},
    number = {2},
    pages = {421\ndash 434},
    url = {https://doi.org/10.1090/S0002-9939-2010-10689-5},
    review = {\MR{2736326}}, }

\bib{ll95}{article}{
    author = {Lott, John},
    author = {L{\"u}ck, Wolfgang},
    title = {${L}\sp 2$-topological invariants of $3$-manifolds},
    date = {1995},
    issn = {0020-9910},
    journal = {Invent. Math.},
    volume = {120},
    number = {1},
    pages = {15\ndash 60},
    review = {\MR{96e:58150}}, }

\bib{l94}{article}{
    author = {L{\"u}ck, Wolfgang},
    title = {Approximating ${L}\sp 2$-invariants by their finite-dimensional analogues},
    date = {1994},
    issn = {1016-443X},
    journal = {Geom. Funct. Anal.},
    volume = {4},
    number = {4},
    pages = {455\ndash 481},
    review = {\MR{95g:58234}}, }

\bib{m48}{article}{
    author = {Malcev, A.~I.},
    title = {On the embedding of group algebras in division algebras},
    date = {1948},
    journal = {Doklady Akad. Nauk SSSR (N.S.)},
    volume = {60},
    pages = {1499\ndash 1501},
    url = {https://mathscinet.ams.org/mathscinet-getitem?mr=25457},
    review = {\MR{25457}}, }

\bib{n49}{article}{
    author = {Neumann, B.~H.},
    title = {On ordered division rings},
    date = {1949},
    issn = {0002-9947},
    journal = {Trans. Amer. Math. Soc.},
    volume = {66},
    pages = {202\ndash 252},
    url = {https://doi.org/10.2307/1990552},
    review = {\MR{32593}}, }

\bib{p89}{book}{
    author = {Passman, Donald~S.},
    title = {Infinite crossed products},
    series = {Pure and Applied Mathematics}, publisher={Academic Press, Inc., Boston, MA},
    date = {1989},
    volume = {135}, ISBN={0-12-546390-1},
    url = {https://mathscinet.ams.org/mathscinet-getitem?mr=979094},
    review = {\MR{979094}}, }

\bib{s00}{article}{
    author = {Schick, Thomas},
    title = {Integrality of {$L^2$}-{B}etti numbers},
    date = {2000},
    issn = {0025-5831},
    journal = {Math. Ann.},
    volume = {317},
    number = {4},
    pages = {727\ndash 750},
    url = {http://dx.doi.org/10.1007/PL00004421},
    review = {\MR{1777117 (2002k:55009a)}}, }

\bib{s02}{article}{
    author = {Schick, Thomas},
    title = {Erratum: ``{I}ntegrality of {$L^2$}-{B}etti numbers''},
    date = {2002},
    issn = {0025-5831},
    journal = {Math. Ann.},
    volume = {322},
    number = {2},
    pages = {421\ndash 422},
    url = {http://dx.doi.org/10.1007/s002080100282},
    review = {\MR{1894160 (2002k:55009b)}}, }

\bib{s87}{thesis}{
    author = {Sikorav, Jean-Claude},
    title = {Homologie de {N}ovikov associ{\'e}e a une classe de cohomologie de degr{\'e} un}, type={Ph.D. Thesis},
    date = {1987}, }

\bib{s61}{inproceedings}{
    author = {Stallings, John},
    title = {On fibering certain {$3$}-manifolds},
    date = {1961},
    booktitle = {Topology of 3-manifolds and related topics ({P}roc. {T}he {U}niv. of {G}eorgia {I}nstitute, 1961)}, publisher={Prentice-Hall, Inc., Englewood Cliffs, NJ},
    pages = {95\ndash 100},
    url = {https://mathscinet.ams.org/mathscinet-getitem?mr=158375},
    review = {\MR{158375}}, }

 \bib{t54}{inproceedings}{
    author = {Tamari, Dov},
    title = {A refined classification of semi-groups leading to generalised polynomial rings with a generalized degree concept},
    date = {1954},
    booktitle = {Proc. ICM vol. 3, Amsterdam},
    pages = {439--440}, }

 \end{biblist}

\end{bibdiv}

\end{document}